\newcommand{\R}{\mathbb R}
\newcommand{\C}{\mathbb C}
\renewcommand{\mod}[1]{\lvert #1 \rvert}
\newcommand{\norm}[1]{\left\| #1 \right\|}
\newcommand{\I}{\text{i}}
\newcommand{\dom}[1]{\mathcal D\left(#1\right)}
\renewcommand{\H}{\mathcal H}
\newcommand{\A}{\mathcal A}
\newcommand{\V}{\mathcal V}
\renewcommand{\S}{\mathcal S}
\renewcommand{\L}{\mathcal L}
\newcommand{\LHV}{\mathcal L^\dagger(\mathcal V)}
\DeclareMathOperator{\vspan}{Span}
\DeclareMathOperator{\gr}{\mathcal G}
\theoremstyle{definition}
\newtheorem{defn}{Definition}[section]
\theoremstyle{plain}
\newtheorem{prop}[defn]{Proposition}
\newtheorem{lem}[defn]{Lemma}
\newtheorem{thm}[defn]{Theorem}
\newtheorem{coro}[defn]{Corollary}
\theoremstyle{remark}
\newtheorem{ej}[defn]{Example}
\newtheorem{rk}[defn]{Remark}
\title{Integrable representations  of involutive algebras and Ore localization
}
\author{Rodrigo Vargas Le-Bert\footnote{
\Letter {\tt rvargas@inst-mat.utalca.cl}. Supported by Fondecyt Postdoctoral Grant N\textordmasculine 3110045. 
} }
\affil{
Instituto de Matemática y Física, Universidad de Talca \\
Casilla 747, Talca, Chile
}
\date{April 5, 2011}
\begin{document}
\maketitle

\begin{abstract}
Let $\A$ be a unital algebra equipped with an involution $(\cdot)^\dagger$, and suppose  that the multiplicative set $\S\subseteq \A$ generated by the elements of the form $1+a^{\dagger} a$ contains only regular elements and satisfies  the Ore condition. We prove that:
\begin{itemize}
	\item Ultracyclic representations of $\A$ admit an integrable extension (acting on a possibly \emph{larger} Hilbert space).
	\item Integrable representations of $\A$ are in bijection with representations of the Ore localization $\A\mathcal S^{-1}$ (which we prove to be an involutive algebra).
\end{itemize}
This second result can be understood as a restricted converse to a theorem by Inoue asserting that representations of symmetric involutive algebras are integrable. 
\medskip \\
{\bf 2010 MSC}: 16S (primary); 46L (secondary).
\end{abstract}


\section{Introduction}

Unbounded operator  algebras appear in several important domains, such as quantum field theory, representations of Lie algebras and quantum groups. Consequently, there has long been an interest in developing their theory, which despite that has grown rich in technical details and relatively poor in applications (see \cite{m:Schm90, mp:DubiHenn90, m:AntoInouTrap02, m:Frag05} for complete expositions,~\cite{mp:Baga07} for a physical applications survey, and~\cite{m:MallHara05} for applications in other fields). 
Among the causes for this fact lie the inherent difficulties in the representation theory of general involutive algebras, which we summarize as follows.

Let~$\A$ be a unital algebra equipped with an involution $(\cdot)^{\dagger}$ and $\H$ a Hilbert space.
\begin{enumerate}
	\item Speaking of a representation presupposes that sum, product and involution are well-defined; however, unbounded operators are not defined all over $\H$ and cannot be blindly added or composed. Hence, one usually postulates the existence of an invariant domain $\V\subseteq\H$ for the operators which will represent $\A$, thus allowing for a pointwise definition of their sum and product. Letting
\[
\LHV = \Set{ a:\V\rightarrow\V | \V\subseteq\dom{a^*} \text{ and } a^*\V\subseteq \V },
\] 
one obtains an algebra with involution $a^\dagger = a^*|_{\V}$, and then can define a representation to be a morphism $\pi:\A\rightarrow \LHV$.
	\item Operators in $\L^\dagger(\V)$ are closable, for their adjoint is densely defined. Therefore, the natural algebraic operations with them are the so-called \emph{strong sum} and \emph{strong product,} see Definition \ref{strong_ops}. The problem arises from this, together with the fact that each $a\in\LHV$ may admit more than one closed extension. As a consequence, pointwise operations become ambiguous in a sense.
	They are too weak a version of the strong ones. For a representation $\pi$, this means that
	\[
	\bar\pi(a+b)\subseteq \bar\pi(a)+\bar\pi(b),\quad \bar\pi(ab) \subseteq \bar\pi(a)\bar\pi(b),\quad \bar\pi(a^\dagger)\subseteq \bar\pi(a)^*,
	\]
and equality does not hold in general.
\end{enumerate}

Simply put, $\LHV$ is not an object that truly captures the algebraic structure of closed operators,
thus allowing, even in the simplest cases, for the appearance of ill-behaved representations. The following two examples are archetypical of good and bad behaviour.

\begin{ej}
Let $G$ be a Lie group with Lie algebra $\mathfrak g$. Representations of the universal enveloping algebra $\A = \mathcal U(\mathfrak g)$ that arise by differentiation from unitary representations of~$G$ are called \emph{integrable} (a complete exposition is found in \cite{m:Schm90}). They are characterized by the fact that they respect the involution: 
\[
\bar\pi(a^\dagger) = \bar\pi(a)^*,\quad \forall a\in\A,
\]
a property which is taken as the definition of integrability in the general case.
Integrability is also very important when $\A$ is the observable algebra of a quantum system. 
\end{ej}
\begin{ej}
Let $M$ be any properly infinite von Neumann algebra acting on a separable Hilbert space. Then, there exists a representation $\pi$ of $\A=\C[x,y]$, the algebra of polynomials in two commuting hermitian variables,  such that $\pi(x)$ and $\pi(y)$ are essentially self-adjoint and their spectral projections generate~$M$. 
Such representations are not integrable. For more details, see \cite[Section 5]{m:Powe71} and  \cite[Section 9.4]{m:Schm90}. 
\end{ej}

%

The second  example shows that  conditions must be imposed on representations, in order to exclude pathological behaviour and make them useful in practice.
Now, during the last decade, two general approaches to the well-behaved representations have been developed.
In what follows we provide a brief description of both (a complete discussion is found in~\cite{m:AntoInouTrap02}). After that, we will be in position to comment on one important limitation that they share, and on how it is overcome in an approach that we propose in this paper.

The first one, of an algebraic flavor, is due to Schmüdgen~\cite{m:Schm02}. It generalizes the notion of integrable representations of universal enveloping algebras $\mathcal U(\mathfrak g)$, in the following way: integrable representations of $\mathcal U(\mathfrak g)$ determine unitary representations of the corresponding simply connected Lie group $G$. Those, in turn, are equivalent to representations of the Banach algebra $L^1(G)$. Now, it turns out that an integrable representation of $\mathcal U(\mathfrak g)$ can be recovered from the corresponding representation of $C_0^\infty(G)\subseteq L^1(G)$ by making use of the natural action of $\mathcal U(\mathfrak g)$ on $C_0^\infty(G)$. The generalization to arbitrary involutive algebras goes as follows: say that the involutive algebra $\A$ and the normed involutive algebra $\A_0$ are \emph{compatible} if there is an action of $\A$ on $\A_0$ such that
\[
(a\cdot x)^\dagger y = x^\dagger (a^\dagger \cdot y),\quad \forall a\in\A,\ \forall x,y\in\A_0.
\]
Then, non-degenerate, continuous representations $\pi_0$ of $\A_0$ determine what we define to be the well-behaved representations $\pi$ of $\A$ by
\[
\pi(a)\pi_0(x) = \pi_0(a\cdot x),\quad a\in\A,\ x\in\A_0.
\]

The second approach, of an analytic flavor, was proposed by Bhatt, Inoue and Ogi~\cite{m:BhatInouOgi01}. It is based on unbounded C*-seminorms. Given an involutive algebra $\A$, an unbounded C*-seminorm is a C*-seminorm $p:\A_0\rightarrow \R$, where $\A_0$ is a subalgebra of $\A$. The kernel $\mathcal I$ of such a seminorm is a bilateral ideal of $\A_0$, and the completion with respect to $p$ of $\A_0/\mathcal I$ is a C*-algebra which we denote by $A$. Now, let
\[
\mathcal N = \set{ x\in\A_0 | ax\in\A_0,\forall a\in\A},
\]
which is a left ideal of $\A$. Each representation $\pi_0:A\rightarrow B(\H)$ induces a representation $\pi:\A\rightarrow \LHV$, where
\[
\V = \vspan \Set{ \pi_0(x+\mathcal I)\xi | x\in\mathcal N, \xi\in\H },
\]
by the simple formula $\pi(a)\pi_0(x+\mathcal I)\xi = \pi_0(ax+\mathcal I)\xi$. 
Observe, however, that $\V$ might not be dense in $\H$ and, when this is the case, we say that the representation is well-behaved. One sees that  well-behaved representations satisfy $\norm{\bar\pi(a)} = p(a)$, for all $a\in\A_0$ (in general, $\norm{\bar\pi(a)}\leq p(a)$).

This second approach is more general than the first one, and can be further generalized to the case of partial involutive algebras~\cite{m:Trap06}. The precise relationship between the two approaches has been worked out in~\cite{m:Inou04}. The limitation referred to above is more apparent in the first one: obtaining well-behaved representations of $\A=\mathcal U(\mathfrak g)$ requires knowledge of representations of $\A_0 = C_0^\infty(G)$. If one is to obtain representations of $G$ from representations of $\mathcal U(\mathfrak g)$, this is clearly going in the wrong direction. The same happens with the second approach, as is best seen with an example: take $\A=\C[x]$, the commutative free algebra on one hermitian generator, and consider its well-behaved representation by multiplication operators on $L^2(\R)$. There is no subalgebra of $\A$ on which the corresponding norm is finite. Thus, obtaining such a simple representation by the second approach requires enlarging $\C[x]$ to contain at least one function of exponential decay---a procedure which is, again, taking as given something that, in several practical cases, should come as a result. 

Our approach is precisely based on enlarging the algebra $\A$ to contain inverses to the elements of the form $1+a^\dagger a$, thus obtaining a so-called \emph{symmetric involutive algebra.} It is known that representations of the latter are integrable~\cite{m:Inou76}. Our main result is Theorem~\ref{ppal}, which says that when the multiplicative set $\S$ generated by $\set{1+a^\dagger a | a\in\A}$ satisfies the Ore condition (see Section~\ref{ore}), integrable representations of $\A$ are in bijection with representations of the Ore localization $\A\S^{-1}$.

The paper is organized  as follows: the second section is a short reminder of necessary background, and
our results are proved in the third section.
For convenience, an elementary appendix on closable operators is included.

\section{Short reminder on representation theory}


This section contains no new results. It is included only for the convenience of non-expert readers, and it will serve to introduce our notations, too. 

\subsection{Ultracyclic representations and GNS construction} \label{alg_rep_thy}

Let $\V$ be a complex vector space and $a\in\L(\V)$. We denote their corresponding algebraic duals by
\[
\V^{\dagger} = \Set{f: \V\rightarrow\C | f \text{ is complex linear}},\quad a^{\dagger}: f\in \V^{\dagger} \mapsto fa\in \V^{\dagger}.
\]
Now, let $\A$ be a unital involutive algebra with involution $(\cdot)^\dagger$, to be represented by operators in $\L(\V)$.  Allowing for a correspondence between involution and algebraic duality requires a choice of antilinear inclusion $\V\hookrightarrow \V^{\dagger}$, which we assume given by an embedding $\V\hookrightarrow \H$ as a dense subspace in a Hilbert space.

\begin{defn}
Given a dense subspace $\V$ of a Hilbert space $\H$, denote by
\[
\LHV = \Set{ a:\V\rightarrow \V | \V\subseteq \dom{a^*},\ a^*\V\subseteq \V},
\]
which is an involutive algebra with involution $a^\dagger = a^*|_\V$.
A \emph{representation} of the involutive algebra $\A$ is a morphism $\pi:\A\rightarrow \LHV$ of involutive algebras.
We say that $\pi$ is \emph{ultracyclic} if it admits an ultracyclic vector, that is, an $\Omega\in \V$ such that $\V=\pi(\A)\Omega$.
\end{defn}


\begin{defn}
Let $\A$ be an involutive algebra and $\A_\text h$ the real vector subspace of its hermitian elements. We say that $x\in\A_\text h$ is \emph{positive} if it belongs to the cone
\[
\Pi(\A) = \Set{ \sum_{i=1}^n \lambda_ia_i^\dagger a_i | \lambda_i>0,\ a_i\in \A } \subseteq \A_\text h.
\]
\end{defn}

The cone of positive elements allows one to define an order relation on $\A$. This, in turn, gives an order relation on its algebraic dual: we say that $f\in \A^{\dagger}$ is \emph{positive} if
\[
f(x) \geq 0,\quad \forall x\in \Pi(\A).
\]
Recall that positivity implies:
\begin{enumerate}
\item
$f(a^\dagger b)=\overline{f(b^\dagger a)}$.
\item
The Cauchy-Schwartz inequality $\mod{f(a^\dagger b)}^2\leq f(a^\dagger a)f(b^\dagger b)$.
\end{enumerate}
We say that $f\in\A^{\dagger}$ is a \emph{state} if it is positive and $f(1)=1$. We denote the set of states of $\A$  by $\Sigma(\A)$.


Given a representation $\pi:\A\rightarrow \LHV$, one obtains a state $f\in\A^\dagger$ from any  vector $\Omega\in\V$ with $\norm\Omega = 1$ by the formula
\[
f(a) = \langle \Omega,\pi(a)\Omega\rangle.
\]
The GNS construction allows a recovery of both $\pi|_{\pi(\A)\Omega}$ (modulo unitary conjugation) and $\Omega$ (modulo a phase factor) from $f$, thus establishing a correspondence between ultracyclic representations and states. We proceed to describe it; for a complete treatment, see~\cite{m:Schm90}.

Let $f\in \Sigma(\A)$, and consider the set
\(
\mathcal I = \set{ a\in \A | f(a^\dagger a)=0 }.
\)
Using the Cauchy-Schwartz inequality it is easily seen that $\mathcal I$ is a left ideal. The quotient $\V = \A/\mathcal I$ is densely embedded in its Hilbert space completion $\H$ with respect to the scalar product
\[
\left\langle [a], [b]\right\rangle = f(a^\dagger b),\quad a,b\in\A,
\]
where $[\cdot]$ denotes the equivalence class in $\V$ of its argument.
The GNS representation $\pi$ is simply given by the canonical left $\A$-module structure of $\V$. It admits the ultracyclic vector $\Omega = [1]\in\V$.

\subsection{Integrability}

As mentioned in the introduction, the algebraic operations of $\LHV$ are not satisfactory from an analytic point of view. 
%
Indeed, every $a\in\LHV$ is closable because its adjoint $a^*$ is densely defined. Now, while closed operators cannot always be added or composed, the natural operations when they can are the following.

\begin{defn} \label{strong_ops}
Let $A,B\in\mathcal C(\H)$, the set of closed, densely defined operators on a Hilbert space $\H$.
If $\mathcal D = \dom A\cap \dom B$ is dense and $A|_{\mathcal D} + B|_{\mathcal D}$ is closable, we define their \emph{strong sum}
\[
A+B = \overline{A|_{\mathcal D} + B|_{\mathcal D}} \in \mathcal C(\H).
\]
Analogously, if $\mathcal D = B^{-1}\dom A$ is dense and $AB|_{\mathcal D}$ is closable, we define their \emph{strong product}
\[
AB = \overline{AB|_{\mathcal D}} \in \mathcal C(\H).
\]
\end{defn}
\begin{rk}
When operating with closed operators we will always mean strong operations. This should cause no confusions, because we will always write elements of $\mathcal C(\H)$  in uppercase, and elements of $\LHV$ in lowercase.
\end{rk}
\begin{prop} \label{A+B, AB exist}
Let $a,b\in\LHV$. If $A$ and $B$ are their respective closures, 
then  $A+B$ and  $AB$ exist.
\end{prop}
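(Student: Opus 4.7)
The plan is to verify, for each of the strong sum and the strong product, the two conditions from Definition \ref{strong_ops}: that the relevant domain is dense and that the restriction is closable. The key observation is that $\V$ itself sits inside both domains, and that $a+b$ and $ab$ (computed pointwise in $\LHV$) give the restrictions of the candidate operators to $\V$.

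First I would establish the density of the domains. Since $A$ and $B$ are the closures of $a$ and $b$, the subspace $\V$ is contained in both $\dom A$ and $\dom B$, so $\V\subseteq \dom A\cap \dom B$, which is therefore dense. For the product, because $b\V\subseteq\V$ by the very definition of $\LHV$, and $\V\subseteq\dom A$, we get $\V\subseteq B^{-1}\dom A$, which is again dense.

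Next I would check closability via the standard criterion: an operator with densely defined adjoint is closable. For the sum, let $T=A|_{\mathcal D}+B|_{\mathcal D}$ with $\mathcal D = \dom A\cap \dom B$. Any $\xi\in\dom A^*\cap\dom B^*$ satisfies $\xi\in\dom T^*$ with $T^*\xi = A^*\xi + B^*\xi$. Since $a^\dagger = a^*|_\V$ and similarly for $b$, both $A^*=(\bar a)^*=a^*$ and $B^*=(\bar b)^*=b^*$ contain $\V$ in their domains, so $\V\subseteq\dom T^*$ and $T$ is closable. For the product, let $T=AB|_{\mathcal D}$ with $\mathcal D = B^{-1}\dom A$. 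If $\xi\in\dom A^*$ and $A^*\xi\in\dom B^*$, then $\xi\in\dom T^*$ with $T^*\xi = B^*A^*\xi$. For $\xi\in\V$ we have $A^*\xi = a^\dagger\xi\in \V$ (since $a^\dagger\V\subseteq\V$), and then $A^*\xi\in\V\subseteq\dom B^*$. Thus $\V\subseteq\dom T^*$, so $T$ is closable.

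There is no real obstacle here; the proof is essentially a bookkeeping exercise using the defining properties $\V\subseteq\dom a^*$ and $a^*\V\subseteq\V$ of elements of $\LHV$. The only point worth being slightly careful about is the identification $(\bar a)^* = a^*$, which is the standard fact that taking the closure does not change the adjoint, and the justification that the adjoint of a pointwise sum (resp.\ composition) of closed operators contains the sum (resp.\ composition) of their adjoints on the natural intersection domain — both of which are routine and can be relegated to the appendix on closable operators.
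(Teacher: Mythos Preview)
Your proof is correct and follows essentially the same route as the paper: both arguments show closability by exhibiting $\V$ inside the domain of the adjoint of the candidate operator, the paper via the direct inner-product identity $\langle (A+B)\xi,\eta\rangle = \langle \xi,(a^\dagger+b^\dagger)\eta\rangle$ (and analogously for the product), you via the general inclusion $\dom A^*\cap\dom B^*\subseteq\dom T^*$ together with $(\bar a)^*=a^*\supseteq a^\dagger$. You are slightly more explicit about the density of $\mathcal D$, which the paper leaves implicit.
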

\begin{proof}
Let  $\mathcal D = \dom A\cap \dom B$. One has that
\[
\langle (A+B)\xi,\eta \rangle = \langle \xi,(a^\dagger+b^\dagger)\eta \rangle,\quad \forall \xi\in\mathcal D,\ \forall \eta\in\V,
\]
whence
\(
a^\dagger + b^\dagger \subseteq (A|_{\mathcal D} + B|_{\mathcal D})^*
\)
and $A+B$ exists. Analogously, if $\mathcal D = B^{-1}\dom A$,
\[
\langle AB\xi,\eta \rangle = \langle \xi, b^\dagger a^\dagger \eta\rangle, \quad \forall \xi\in\mathcal D,\ \forall\eta\in\V,
\]
whence $b^\dagger a^\dagger \subseteq (AB|_{\mathcal D})^*$ and $AB$ exists.
\end{proof}
\begin{rk}
The analytic procedure of closure breaks down the algebraic structure of $\LHV$. Indeed, 
\[
\overline{a+b} \subseteq \bar a+\bar b,\quad \overline{ab}\subseteq \bar a\bar b,\quad \overline{a^\dagger}\subseteq a^*,\quad \forall a,b\in\L_\H(\V),
\]
and equalities do not hold in general.
\end{rk}

Thus, given a representation $\pi:\A\rightarrow \LHV$, one should not expect that $\bar\pi$ be a representation too, where
\[
\bar\pi: a\in\A\mapsto \overline{\pi(a)} \in \mathcal C(\H).
\]
Integrable representations are, almost by definition, those for which this is actually the case.
\begin{defn}
We say that a representation $\pi:\A\rightarrow \LHV$ is \emph{integrable} if $\bar\pi(a^\dagger) = \bar\pi(a)^*$, for all $a\in\A$.
\end{defn}
\begin{prop} \label{integrable <-> closure is rep}
Let $\pi:\A\rightarrow \LHV$ be a representation.
One has that $\pi$ is integrable if, and only if, $\bar\pi(\A)\subseteq \mathcal C(\H)$ is an involutive algebra and  $\bar\pi:\A\rightarrow \bar\pi(\A)$ is a morphism.
\end{prop}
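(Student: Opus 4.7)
The plan is to prove the two implications separately, with the reverse being essentially immediate and the forward direction requiring some manipulation of adjoints of strong sums and products.  For the reverse direction, if $\bar\pi:\A\to\bar\pi(\A)$ is a morphism of involutive algebras it must in particular preserve involution, so $\bar\pi(a^\dagger)=\bar\pi(a)^*$ for every $a\in\A$, which is precisely the definition of integrability.

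For the forward direction, assume $\bar\pi(a^\dagger)=\bar\pi(a)^*$.  This hypothesis alone shows that $\bar\pi(\A)$ is closed under the involution of $\mathcal C(\H)$ and that $\bar\pi$ preserves it, so the only remaining content is that $\bar\pi(a+b)=\bar\pi(a)+\bar\pi(b)$ and $\bar\pi(ab)=\bar\pi(a)\bar\pi(b)$, the strong operations on the right being defined by Proposition~\ref{A+B, AB exist}.  For the sum, I would start from the elementary inclusion $\bar\pi(a+b)\subseteq\bar\pi(a)+\bar\pi(b)$ recorded in the remark following Proposition~\ref{A+B, AB exist}, and squeeze it to an equality by means of adjoints.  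Taking $(\cdot)^*$ reverses inclusions; combining this with the standard fact $\bar\pi(a)^*+\bar\pi(b)^*\subseteq(\bar\pi(a)+\bar\pi(b))^*$, applying integrability to $a$, $b$ and $a+b$, and using the identity $(a+b)^\dagger=a^\dagger+b^\dagger$, one obtains
\begin{align*}
(\bar\pi(a)+\bar\pi(b))^* &\subseteq \bar\pi(a+b)^* = \bar\pi(a^\dagger+b^\dagger) \\
 &\subseteq \bar\pi(a^\dagger)+\bar\pi(b^\dagger) = \bar\pi(a)^*+\bar\pi(b)^* \\
 &\subseteq (\bar\pi(a)+\bar\pi(b))^*,
\end{align*}
so every inclusion in the chain is an equality.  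Taking adjoints once more and invoking $T^{**}=T$ for closed densely defined $T$ then yields $\bar\pi(a)+\bar\pi(b)=\bar\pi(a+b)$.  The product case is entirely analogous, replacing the inclusion for sums of adjoints by $\bar\pi(b)^*\bar\pi(a)^*\subseteq(\bar\pi(a)\bar\pi(b))^*$ and using $(ab)^\dagger=b^\dagger a^\dagger$.

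The main obstacle is marshalling these inclusions in the correct order: the facts $A^*+B^*\subseteq(A+B)^*$ and $B^*A^*\subseteq(AB)^*$ for strong operations, the reversal of inclusions under $(\cdot)^*$, and the involutivity $T^{**}=T$ for closed densely defined operators are standard in the theory of unbounded operators but are not made explicit in the excerpt.  Conceptually, the argument is a two-sided squeeze in which integrability is precisely the ingredient that pins down the otherwise inaccessible adjoint of a strong sum or product, forcing the \emph{a priori} one-sided inclusions to become equalities.
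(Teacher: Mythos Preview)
Your proof is correct and follows essentially the same route as the paper: both exploit integrability together with the inclusion of $\pi(a^\dagger+b^\dagger)$ (resp.\ $\pi(b^\dagger a^\dagger)$) in the adjoint of the strong sum (resp.\ product), then take adjoints to upgrade the easy inclusion $\bar\pi(a+b)\subseteq\bar\pi(a)+\bar\pi(b)$ to an equality. The paper's version is marginally more direct---it obtains $\bar\pi(a)+\bar\pi(b)\subseteq\pi(a^\dagger+b^\dagger)^*=\bar\pi(a+b)$ in one line from the proof of Proposition~\ref{A+B, AB exist} rather than closing a cycle of four inclusions---but the substance is the same.
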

\begin{proof}
Sufficiency is obvious. For necessity, 
let $a,b\in\A$ and $A=\bar\pi(a), B=\bar\pi(b)$. From the proof of Proposition~\ref{A+B, AB exist},
\[
A+B = (A|_{\mathcal D} + B|_{\mathcal D})^{**} \subseteq \pi(a^\dagger+b^\dagger)^* = \bar\pi(a+b),
\]
where $\mathcal D= \dom A\cap \dom B$ and the last equality follows from integrability. Analogously,
\[
AB = (AB|_{\mathcal D})^{**} \subseteq \pi(b^\dagger a^\dagger)^* = \bar\pi(ab),
\]
where $\mathcal D = B^{-1}\dom A$.
\end{proof}


\subsection{Symmetric involutive algebras} \label{symm_inv_alg}

We have seen that representations which are not integrable do not really deserve their name from an analytic point of view, and that the problem originates in the fact that $\LHV$ is not a good replacement for $B(\H)$ in generalizing the theory of C*-algebras  to unbounded operator algebras.
%
Now, the need for such a generalization has long been recognized and, over time, the notion of \emph{partial involutive algebras}~\cite{m:AntoInouTrap02} has emerged as the safest candidate (because of its generality). In that approach, one actually gives up the structure of algebra, fully acknowledging the fact that strong sum and strong product are not defined all over $\mathcal C(\H)\times\mathcal C(\H)$.  Here, we will limit ourselves to work with subsets of $\mathcal C(\H)$ which are involutive algebras with respect to strong sum, strong product and operator involution. Two known examples are: the set of measurable operators affiliated with a von Neumann algebra admitting a normal, semifinite trace~\cite{m:Sega53}; and the so-called \emph{symmetric involutive algebras}~\cite{m:Inou76}, on which we base our approach to the well-behaved representations of involutive algebras.

\begin{defn}
A unital, involutive algebra is said to be \emph{symmetric} if $1+a^\dagger a$ is invertible, for all $a\in\A$. 
\end{defn}
There is a generalization to non-unital algebras, see~\cite{m:Frag05}, for instance. Representations $\pi:\A\rightarrow \LHV$ of symmetric involutive algebras enjoy several good properties, among which we mention:
\begin{itemize}
	\item They are integrable~\cite{m:Inou77}. We will revisit this in the course of this paper. 
	\item They are direct sums of cyclic representations. We remark that this is not always the case if $\A$ is any involutive algebra, see \cite[Corollary 11.6.8]{m:Schm90}.
\end{itemize}

\section{Integrable representations and Ore localization}

\subsection{Integrability and symmetry}



All over this subsection, $\pi:\A\rightarrow \LHV$ will be a representation and
\[
\S = \Set{ \prod_{i=1}^n \bigl(1+a_i^\dagger a_i\bigr) | a_i\in \A}. 
\]
If $\pi$ is integrable, it follows immediately from Propositions~\ref{integrable <-> closure is rep} and~\ref{App:fund}   that $\bar\pi(s)$ is invertible, for all $s\in\S$. Here we show that these two properties are actually equivalent, and then give an alternative proof of Inoue's result asserting that representations of symmetric involutive algebras are integrable.

\begin{lem} \label{pi(s)=S}
Let $s=\prod_{i=1}^{n}(1+a_{i}^{\dagger}a_{i}) \in \S$ and
\[
S = \prod_{i=1}^{n}(1+A_{i}^{*}A_{i}),\quad A_{i} = \bar\pi(a_{i}).
\]
If $\bar\pi(s)$ is surjective, then $\bar\pi(s) = S$. In particular, $\bar\pi(s)$ is invertible and has a bounded inverse.
\end{lem}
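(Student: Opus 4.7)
The plan is to show $\bar\pi(s)\subseteq S$, then combine this inclusion with the surjectivity of $\bar\pi(s)$ and the injectivity of $S$ to force equality.

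For the inclusion, I would argue factor by factor. Each $A_i^*A_i$ is, by von Neumann's theorem, a self-adjoint positive operator; hence $B_i := 1 + A_i^*A_i$ is self-adjoint with $B_i\geq 1$, and consequently admits a bounded self-adjoint inverse $C_i := B_i^{-1}$ with $0\leq C_i\leq 1$. Since $\pi$ is a morphism of involutive algebras into $\LHV$ and $\pi(a_i^\dagger)=\pi(a_i)^*|_{\V}\subseteq A_i^*$, I get, for any $\xi\in\V$,
\[
\pi(1+a_i^\dagger a_i)\xi = \xi + \pi(a_i^\dagger)\pi(a_i)\xi = \xi + A_i^*A_i\xi = B_i\xi,
\]
so $\V\subseteq \dom B_i$ and $B_i|_{\V}=\pi(1+a_i^\dagger a_i)$. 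Since $\V$ is $\pi(\A)$-invariant, an easy induction shows $\V\subseteq \dom S$ with $S|_{\V}=\pi(s)$. Taking closures, and using that $S$ is closed (see next paragraph), this gives $\bar\pi(s)\subseteq S$.

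Next I would identify $S$ with $C^{-1}$, where $C := C_n C_{n-1}\cdots C_1$ is a product of bounded, injective self-adjoint operators and hence bounded and injective. Explicitly, $\xi\in\dom S$ iff $\xi = C\eta$ for some $\eta$, in which case $S\xi = \eta$. Thus $S$ is closed, injective, and bijective from $\dom S$ onto $\H$, with bounded inverse $C$.

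Finally, I combine: let $\xi\in\dom S$ and set $\eta = S\xi$. By the surjectivity hypothesis, there is $\xi'\in\dom\bar\pi(s)$ with $\bar\pi(s)\xi'=\eta$, and since $\bar\pi(s)\subseteq S$, also $S\xi' = \eta = S\xi$. Injectivity of $S$ gives $\xi=\xi'\in\dom\bar\pi(s)$, so $\dom S\subseteq \dom\bar\pi(s)$ and therefore $\bar\pi(s)=S$. The ``in particular'' follows because $S^{-1}=C$ is bounded. The main subtlety is setting up the strong product $S$ carefully enough to know it is closed and injective; once that is in place (which is essentially von Neumann's theorem plus the observation that each $C_i$ is a bounded injection), the rest is an abstract argument about an injective operator containing a surjective restriction.
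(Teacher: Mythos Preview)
Your argument is correct and follows essentially the same route as the paper: establish $\pi(s)\subseteq S$ on $\V$, observe that $S$ is closed and injective with bounded inverse (you make this explicit via $S=C^{-1}$ with $C=C_n\cdots C_1$, the paper just asserts it), hence $\bar\pi(s)\subseteq S$, and then use that a surjective operator admits no strictly larger injective extension. The only difference is the level of detail in identifying $S^{-1}$; the logical skeleton is identical.
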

\begin{proof}
Observe that
\[
S = (1+A_{1}^{*}A_{1})(1+A_{2}^{*}A_{2})|_{\mathcal D_{2}}\cdots (1+A_{n}^{*}A_{n})|_{\mathcal D_{n}}
\]
where $\mathcal D_{i+1} = (1+A_{i+1}^{*}A_{i+1})^{-1}\mathcal D_{i}$ and $\mathcal D_{1} = \dom{1+A_{1}^{*}A_{1}}$. It follows that $S$ is closed (for it is invertible with bounded inverse). Since $\pi(s)\subseteq S$, we conclude that $\bar\pi(s) \subseteq S$, too. But, by hypothesis, $\bar\pi(s)$ is surjective, whence it does not admit any injective, strict extension and must be equal to $S$.
\end{proof}
\begin{coro} \label{pi(s)pi(t)=pi(st)}
If $\bar\pi(s)$ is surjective for all $s\in\S$, then
\[
\bar\pi(st) = \bar\pi(s)\bar\pi(t), \quad \forall s,t\in\S.
\]
\end{coro}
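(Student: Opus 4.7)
The plan is to leverage Lemma~\ref{pi(s)=S} together with the fact that $\S$ is multiplicatively closed. Given $s,t\in\S$, the product $st$ again lies in $\S$, so $\bar\pi(st)$ is surjective by hypothesis; the lemma then supplies that $\bar\pi(st)$, $\bar\pi(s)$, and $\bar\pi(t)$ are all bijections with bounded inverses. Consequently, the strong product $\bar\pi(s)\bar\pi(t)$ is well defined: its algebraic composition on $\mathcal D = \bar\pi(t)^{-1}\dom{\bar\pi(s)}$ admits the everywhere-defined bounded inverse $\bar\pi(t)^{-1}\bar\pi(s)^{-1}$, and is therefore automatically closed, with no closure operation actually required.

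Next I would establish the inclusion $\bar\pi(st)\subseteq \bar\pi(s)\bar\pi(t)$. On the algebraic domain $\V$ one has $\pi(st)=\pi(s)\pi(t)$ because $\pi$ is a morphism, and $\pi(t)\V\subseteq\V\subseteq \dom{\bar\pi(s)}$ forces $\V\subseteq\mathcal D$. Thus $\pi(st)|_{\V}$ coincides with the restriction of $\bar\pi(s)\bar\pi(t)$ to $\V$, and taking closures (using Definition~\ref{strong_ops}) yields the desired inclusion. To promote this to equality, I would invoke injectivity: $\bar\pi(s)\bar\pi(t)$ is injective thanks to its bounded two-sided inverse, while $\bar\pi(st)$ is already surjective; a surjective operator admits no proper injective extension, so the two must agree.

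The only step needing care is the bookkeeping around the strong product, namely ensuring that the closure implicit in Definition~\ref{strong_ops} is transparent so that an inclusion of the restrictions to $\V$ transfers cleanly to an inclusion of the global closed operators. Thanks to the bounded inverses furnished by Lemma~\ref{pi(s)=S}, this is essentially automatic, which is why the argument as a whole is short — the substantive analytic content was already absorbed into the lemma.
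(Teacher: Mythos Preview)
Your proposal is correct and follows essentially the same route as the paper, which simply records the result as ``a straightforward consequence of Lemma~\ref{pi(s)=S}.'' You have merely made explicit the ingredients the paper leaves implicit: invertibility with bounded inverse from the lemma, closedness of the composition, and the surjective--injective squeeze (precisely the device used inside the proof of Lemma~\ref{pi(s)=S} itself).
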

\begin{proof}
It is a straightforward consequence of Lemma \ref{pi(s)=S}. 
\end{proof}

\begin{lem} \label{pi(s)V is a core}
Let $a\in\A$ and $s\in\S$. If $\bar\pi(t)$ is surjective for all $t\in\S$, then $\bar\pi(s)\V$ is a core for both $\bar\pi(a)$ and $\bar\pi(a)^{*}$.
\end{lem}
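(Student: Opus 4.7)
The plan is to control $\pi(s)\eta$ and $\pi(as)\eta$ simultaneously through the element $r = s^\dagger(1+a^\dagger a)s \in \S$. By Lemma~\ref{pi(s)=S} and Corollary~\ref{pi(s)pi(t)=pi(st)}, $\bar\pi(r) = S^{*}(1+A^{*}A)S$ (with $S = \bar\pi(s)$ and $A = \bar\pi(a)$) is invertible with bounded inverse, and a direct computation on~$\V$ yields the identity $\langle \eta, \pi(r)\eta\rangle = \|\pi(s)\eta\|^2 + \|\pi(as)\eta\|^2$.

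Since $\V$ is a core for $\bar\pi(a)$, it is enough to show that each pair $(\xi,\pi(a)\xi)$ with $\xi\in\V$ lies in the $\H\oplus\H$-closure of $\{(\pi(s)\eta,\pi(as)\eta):\eta\in\V\}$. Setting $\zeta = S^{-1}\xi$, the chain $S\zeta = \xi\in\V\subseteq\dom{A^{*}A}$ together with $(1+A^{*}A)\xi = \pi(1+a^\dagger a)\xi\in\V\subseteq\dom{S^{*}}$ shows that $\zeta \in \dom{\bar\pi(r)}$. Since $\V$ is also a core for $\bar\pi(r)$, I can choose $\eta_n\in\V$ with $\eta_n\to\zeta$ and $\pi(r)\eta_n\to\bar\pi(r)\zeta$; polarizing the quadratic identity gives
\[
\|\pi(s)(\eta_n-\eta_m)\|^2 + \|\pi(as)(\eta_n-\eta_m)\|^2 = \langle \eta_n - \eta_m, \pi(r)(\eta_n - \eta_m)\rangle \to 0,
\]
so that both $\pi(s)\eta_n$ and $\pi(as)\eta_n$ are Cauchy, and closedness of $\bar\pi(s)$ and $\bar\pi(a)$ forces their limits to be $\xi$ and $\pi(a)\xi$ respectively.

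The analogous argument with $r' = s^\dagger(1+aa^\dagger)s\in\S$ and the identity $\langle\eta,\pi(r')\eta\rangle = \|\pi(s)\eta\|^2 + \|\pi(a^\dagger s)\eta\|^2$ shows that $\pi(s)\V$ is a core for $\bar\pi(a^\dagger)$. The statement for $\bar\pi(a)^{*}$ then reduces to proving $\bar\pi(a^\dagger) = \bar\pi(a)^{*}$---integrability at $a$---and this is the step I expect to be the main obstacle. My plan here: setting $B = \bar\pi(a^\dagger)$, on $\V$ one has $\pi(aa^\dagger) = \pi(a)\pi(a^\dagger)$, and taking closures yields the inclusions $B^{*}B = \bar\pi(aa^\dagger) \subseteq \bar\pi(a)\bar\pi(a^\dagger) \subseteq AA^{*}$ (the last using $B\subseteq A^{*}$). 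Both endpoints are self-adjoint, so by maximality of self-adjoint operators among symmetric ones the whole chain is an equality, giving $|B|^2 = B^{*}B = AA^{*} = |A^{*}|^2$ and hence $|B| = |A^{*}|$. Since the domain of a closed densely defined operator coincides with that of its absolute value, $\dom B = \dom{|B|} = \dom{|A^{*}|} = \dom{A^{*}}$; combined with $B\subseteq A^{*}$, this forces $B = A^{*}$, completing the proof.
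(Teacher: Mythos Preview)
Your proof is correct, and it takes a genuinely different route from the paper's.

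For the core property for $\bar\pi(a)$, the paper argues abstractly: since $\bar\pi(t)^{-1}$ (with $t=1+a^\dagger a$) is bounded from $\H$ into $(\dom{\bar\pi(a)},\norm{\cdot}_{\bar\pi(a)})$ with dense range, the image of any dense set is a core; taking the dense set $\bar\pi(ts)\V$ and using Corollary~\ref{pi(s)pi(t)=pi(st)} gives $\bar\pi(t)^{-1}\bar\pi(ts)\V = \bar\pi(s)\V$. Your argument is more hands-on: you build the element $r=s^\dagger(1+a^\dagger a)s\in\S$, exploit the graph-norm identity $\langle\eta,\pi(r)\eta\rangle=\norm{\pi(s)\eta}^2+\norm{\pi(as)\eta}^2$, and use that $\V$ is a core for $\bar\pi(r)$ to produce explicit approximating sequences. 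Both work; the paper's version is shorter, while yours makes the mechanism completely transparent and avoids appealing to the graph-norm continuity of the resolvent.

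For $\bar\pi(a)^*$, the paper simply repeats its argument with $t=1+aa^\dagger$, relying on $\bar\pi(t)^{-1}$ having dense range in $(\dom{\bar\pi(a)^*},\norm{\cdot}_{\bar\pi(a)^*})$. You instead first show $\pi(s)\V$ is a core for $\bar\pi(a^\dagger)$ and then prove integrability at $a$ directly, via the clean chain $B^*B=\bar\pi(aa^\dagger)\subseteq AB\subseteq AA^*$ between self-adjoint endpoints, forcing $|B|=|A^*|$ and hence $\dom B=\dom{A^*}$. This is a nice self-contained argument: in effect you have folded the paper's subsequent corollary (integrability) into the lemma itself, and your maximality-of-self-adjoints step makes explicit exactly why the passage from $\bar\pi(a^\dagger)$ to $\bar\pi(a)^*$ succeeds.
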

\begin{proof}
Let $t=1+a^{\dagger}a\in\S$. From Lemma \ref{pi(s)=S} and Proposition \ref{App:fund} in the appendix, we see that
\[
\bar\pi(t)^{-1}: \left(\H,\norm{\ }\right) \rightarrow \left(\dom {\bar\pi(a)}, \norm{\ }_{\bar\pi(a)}\right)
\]
is continuous and has  dense range. Therefore, $\bar\pi(t)^{-1}\mathcal D$ will be a core for $\bar\pi(a)$, for any dense $\mathcal D\subseteq \H$. Now, consider $\mathcal D = \bar\pi(ts)\V$. It is a dense subspace, for $\bar\pi(ts)$ is surjective and $\V$ is a core for it. Using Corollary \ref{pi(s)pi(t)=pi(st)}, we conclude that
\[
\bar\pi(t)^{-1}\bar\pi(ts)\V = \bar\pi(s)\V
\]
is a core for $\bar\pi(a)$. The fact that it is also a core for $\bar\pi(a)^{*}$ follows from the same argument, applied this time to $t=1+aa^{\dagger}$. Indeed, now
\[
\bar\pi(t)^{-1}: (\H,\norm{\ }) \rightarrow \left(\dom {\bar\pi(a)^{*}}, \norm{\ }_{\bar\pi(a)^{*}}\right)
\]
is continuous and has dense range. 
\end{proof}

\begin{coro}
$\pi$ is integrable if, and only if, $\bar\pi(s)$ is surjective, for all $s\in\S$.
\end{coro}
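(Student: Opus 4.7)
The plan is to prove both implications, using the previous lemmas to do the heavy lifting.

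For necessity, suppose $\pi$ is integrable. By Proposition~\ref{integrable <-> closure is rep}, $\bar\pi:\A\rightarrow\bar\pi(\A)$ is a morphism of involutive algebras, so for $s=\prod_{i=1}^n(1+a_i^\dagger a_i)\in\S$ one gets
\[
\bar\pi(s) = \prod_{i=1}^n \bigl(1+\bar\pi(a_i)^*\bar\pi(a_i)\bigr).
\]
Each factor $1+A^*A$ is invertible with bounded inverse by Proposition~\ref{App:fund}, hence so is their strong product, and in particular $\bar\pi(s)$ is surjective.

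For sufficiency, suppose $\bar\pi(s)$ is surjective for every $s\in\S$. I need to show $\bar\pi(a^\dagger)=\bar\pi(a)^*$ for each $a\in\A$. The inclusion $\bar\pi(a^\dagger)\subseteq \bar\pi(a)^*$ is automatic: since $\pi$ is a $\dagger$-morphism into $\LHV$, we have $\pi(a^\dagger) = \pi(a)^*|_\V \subseteq \pi(a)^* = \bar\pi(a)^*$, and closing both sides gives the claim. For the reverse inclusion, I would note that $1\in\S$ (take $a=0$ in the defining product), so Lemma~\ref{pi(s)V is a core} applied with $s=1$ says that $\V$ is a core for $\bar\pi(a)^*$. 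Since $\bar\pi(a)^*|_\V = \pi(a^\dagger)$, taking closures yields $\bar\pi(a)^* = \overline{\pi(a^\dagger)} = \bar\pi(a^\dagger)$, as required.

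There is no real obstacle here: the work has already been carried out in Lemma~\ref{pi(s)=S} and Lemma~\ref{pi(s)V is a core}. The only delicate point worth being explicit about is the identification $\bar\pi(a)^*|_\V = \pi(a^\dagger)$, which just amounts to unpacking the definition of the involution on $\LHV$ together with the fact that adjoints of closable operators coincide with adjoints of their closures.
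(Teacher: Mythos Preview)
Your proof is correct and follows essentially the same route as the paper: necessity is exactly the observation the paper makes just before Lemma~\ref{pi(s)=S} (via Propositions~\ref{integrable <-> closure is rep} and~\ref{App:fund}), and sufficiency is the paper's argument that $\V$ is a core for $\bar\pi(a)^*$ (Lemma~\ref{pi(s)V is a core} with $s=1$) together with $\bar\pi(a)^*|_\V=\pi(a^\dagger)$. You are simply more explicit than the paper about the identification $\bar\pi(a)^*|_\V=\pi(a^\dagger)$ and about why $1\in\S$, but there is no substantive difference.
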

\begin{proof}
Indeed, given $a\in\A$, $\bar\pi(a^{\dagger}) \subseteq \bar\pi(a)^{*}$. But, being $\V$ a core for both of them, they must actually coincide.
\end{proof}

%
%

We finish this subsection with the following result. As mentioned before, our main result is a partial converse.

\begin{prop} \label{S inv -> anal}
Suppose that $\A\subseteq \mathcal B$, where $\mathcal B$ is an involutive algebra such that the elements of $\S$ are invertible in it. If $\pi$ admits an extension $\tilde\pi:\mathcal B\rightarrow \LHV$, then it is integrable.
\end{prop}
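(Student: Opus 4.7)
The strategy is to verify the hypothesis of the preceding corollary, i.e., to show that $\bar\pi(s)$ is surjective for every $s \in \S$. Given $s = \prod_{i=1}^n(1 + a_i^\dagger a_i) \in \S$, the key technical step will be to show that $\tilde\pi(s^{-1})$ is bounded by $1$ on $\V$, so that it extends to a bounded operator $B:\H \to \H$ which serves as a right inverse for $\bar\pi(s)$.

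For the boundedness, consider first one factor $s_i = 1 + a_i^\dagger a_i$; by hypothesis $s_i^{-1} \in \mathcal B$, and the morphism property of $\tilde\pi$ gives $\pi(s_i)\,\tilde\pi(s_i^{-1}) = 1$ on $\V$. Using that $\langle \pi(s_i)\xi, \xi\rangle = \|\xi\|^2 + \|\pi(a_i)\xi\|^2 \geq \|\xi\|^2$ for $\xi \in \V$, I set $\eta = \tilde\pi(s_i^{-1})\xi \in \V$ and estimate
\[
\|\eta\|^2 \leq \langle \pi(s_i)\eta, \eta\rangle = \langle \xi, \eta\rangle \leq \|\xi\|\,\|\eta\|,
\]
whence $\|\tilde\pi(s_i^{-1})\xi\| \leq \|\xi\|$. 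Since each $\tilde\pi(s_i^{-1})$ maps $\V$ into $\V$ and $\tilde\pi(s^{-1}) = \tilde\pi(s_n^{-1})\cdots\tilde\pi(s_1^{-1})$ by the morphism property, iterating yields $\|\tilde\pi(s^{-1})\xi\| \leq \|\xi\|$ for all $\xi \in \V$, so $\tilde\pi(s^{-1})$ extends to a bounded operator $B$ on $\H$ with $\|B\| \leq 1$.

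Surjectivity of $\bar\pi(s)$ will then follow by a standard closure argument: given $\eta \in \H$, approximate by $\eta_k \in \V$ with $\eta_k \to \eta$. Continuity of $B$ gives $B\eta_k \to B\eta$, while $\pi(s)\,B\eta_k = \pi(s)\,\tilde\pi(s^{-1})\eta_k = \eta_k \to \eta$, so closedness of $\bar\pi(s)$ yields $B\eta \in \dom{\bar\pi(s)}$ and $\bar\pi(s)B\eta = \eta$. Applying the preceding corollary then gives integrability of $\pi$.

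The crux of the argument is the boundedness of $\tilde\pi(s^{-1})$; this is where the specific positivity $\pi(s_i) \geq 1$ of the generating factors is indispensable---for an arbitrary invertible element of $\A$, no analogous contraction estimate would be available. Once the contraction bound is in hand, the reduction to surjectivity and then to integrability is routine bookkeeping with closures.
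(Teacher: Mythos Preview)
Your proof is correct and follows the same overall architecture as the paper's: show that $\bar\pi(s)$ is surjective for every $s\in\S$ by exhibiting a bounded right inverse obtained from $\tilde\pi(s^{-1})$, and then invoke the preceding corollary. The closure argument you give is essentially identical to the paper's.

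The difference lies in how boundedness of $\tilde\pi(s^{-1})$ on $\V$ is obtained. The paper does not estimate $\tilde\pi(s^{-1})$ directly; instead it introduces the closed operator $S=\prod_i(1+A_i^*A_i)$ with $A_i=\bar\pi(a_i)$, uses Proposition~\ref{App:fund} to know that $S^{-1}$ is bounded, and then observes that on $\V$ one has $S^{-1}v=S^{-1}\pi(s)\tilde\pi(s^{-1})v=\tilde\pi(s^{-1})v$ because $\pi(s)\subseteq S$. You instead bound each factor $\tilde\pi(s_i^{-1})$ by~$1$ via the elementary inequality $\langle\pi(s_i)\eta,\eta\rangle\geq\|\eta\|^2$ and Cauchy--Schwarz, then iterate. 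Your route is more self-contained (it avoids the appendix and the auxiliary operator $S$), while the paper's route connects the argument to the structural Lemma~\ref{pi(s)=S}. Both are perfectly valid.
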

\begin{proof}
Given $s = \prod_{i=1}^{n}(1+a_{i}^{\dagger}a_{i})$, we have to prove that $\bar\pi(s)$ is surjective. As in the proof of Lemma \ref{pi(s)=S},
\[
\bar\pi(s) \subseteq S = \prod_{i=1}^{n}(1+A_{i}^{*}A_{i}),\quad A_{i} = \bar\pi(a_{i}).
\]
Now, let $\xi\in\H$ and consider a sequence $\set{v_{n}}\subseteq \V$ converging to $\xi$. Since $S^{-1}$ is bounded, $u_{n} = S^{-1}v_{n}$ converges to, say, $\eta\in\H$. But, by hypothesis, $s$ is invertible in $\mathcal B$, and 
\[
u_{n} = S^{-1}v_{n} = S^{-1}\pi(s)\tilde\pi(s^{-1})v_{n} = \tilde\pi(s^{-1})v_{n},
\]
so that $u_{n}\rightarrow \eta$ and $\pi(s)u_{n} = v_n\rightarrow \xi$. By closedness, $\bar\pi(s)\eta = \xi$, as needed.
\end{proof}

\begin{rk}
As a consequence, we recover the following result due to Inoue~\cite{m:Inou76}:
if $\A$ is symmetric, then its representations are integrable.
\end{rk}

\subsection{Ore localization for involutive algebras} \label{ore}

Let $\A$ be a unital ring and $\S\subseteq \A$ a \emph{multiplicative} subset  (that is, such that $\S\S = \S$. Note that, in particular, $1\in \S$). 
Proposition \ref{S inv -> anal} leads to study the problem of adjoining inverses to the elements of $\S$.
It is not hard to see that there exists a ring $\A_\S$, called \emph{universal localization of $\A$ at $\S$,} which is a universal solution to this.
Now, when localizing, unexpected things can happen (such as ending up with a trivial ring) and it is good to have conditions controlling $\A_\S$ and, above all, enabling one to make calculations in it. 
From our point of view, having a manageable localization is important to investigate two problems:
\begin{enumerate}
	\item The extension of states from $\A$ to  $\A_\S$. By GNS construction, this amounts to obtaining integrable extensions of cyclic representations.
	\item The possibility that integrable representations actually extend to representations of $\A_\S$, providing a converse to Proposition \ref{S inv -> anal}.
\end{enumerate}
In this section we revisit the Ore construction, which deals with the particular case in which $\A_\S$ is the non-commutative analogue of a \emph{ring of fractions}. 
A complete introduction can be found in~\cite{m:GoodWarf04}.
The novelty here is that we show that the Ore construction carries over smoothly to the case of involutive algebras.

\begin{defn}
Let $\A$ be a ring.
We say that a subset $\S\subseteq \A$ is a \emph{right} (resp. \emph{left}) \emph{Ore set} if it is multiplicative and
\[
\forall (a,s) \in \A\times \S, \ \exists (b,t)\in \A\times \S, \  at = sb\ (\text{resp. } ta=bs),
\]
and we say that it is an Ore set if it is both a left and a right Ore set.
\end{defn}
\begin{rk}
If $s$ and $t$ are invertible, the equation $at = sb$ can be rewritten as $s^{-1}a = bt^{-1}$. In intuitive terms, the Ore property is establishing the possibility of writing ``non-commutative fractions'' indifferently from the right or from the left.
\end{rk}

We say that a ring morphism $f:\A\rightarrow \mathcal B$ is $\S$-inverting if $f(s)$ is invertible in $\mathcal B$, for all $s\in\S$. Then, consider the following universal problem for the $\S$-inverting morphism $\ell$:
\begin{center} \begin{tikzpicture}[description/.style={fill=white,inner sep=2pt}] 
\matrix (m) [matrix of math nodes, row sep=2.5em, 
column sep=4em, text height=1.5ex, text depth=0.25ex] 
{ \A & \A_\S \\ 
 & \mathcal B, \\ }; 
\path[->,font=\scriptsize] 
(m-1-1) edge node[auto] {$\ell$} (m-1-2) 
(m-1-1) edge node[auto] {$f$} (m-2-2)
(m-1-2) edge [dashed] node[auto] {$\tilde f$} (m-2-2);
\end{tikzpicture} \end{center} 
where $f$ is any $\S$-invertible morphism.
If $\S\subseteq \A$ is an Ore set, the solution 
can be constructed as follows.
On the set $\A\times \S$ define the equivalence relation 
\[
(a,s)\sim (b,t) \ \Leftrightarrow\ 
\exists  u,v\in \A,\ (au, su) = (bv,tv) \in \A\times \S,
\]
and denote by $[a,s]$ the equivalence class of $(a,s)$. Linear combination and multiplication of elements $[a_1,s_1], [a_2,s_2]\in (\A\times\S)/\!\!\sim$ are defined as follows:
\begin{itemize}
\item
	Choose $(b,t)\in \A\times \S$ such that $s_1t=s_2b$. Then,
	\[
		\lambda[a_1,s_1] + [a_2,s_2] = [\lambda a_1t,s_1t] + [a_2b,s_2b] = [\lambda a_1t+a_2b,s_1t].
	\]
\item
	Choose $(b,t)\in \A\times \S$ such that $a_2t=s_1b$. Then,
	\begin{align*}
	[a_1,s_1]\cdot [a_2,s_2] &= [a_1b,s_2t] \\
		\bigl(&= [a_1b,s_1b] [a_2t,s_2t].\ \bigr)
	\end{align*}
	The last equality is enclosed in parentheses because it does not necessarily make sense; we include it because it does motivate the definition.
\end{itemize}
It can be checked that this definitions  equip $\A\S^{-1} = (\A\times\S)/\!\!\sim$ with a unital ring structure, whose unit is $[1,1]$.
The morphism $\ell: \A\rightarrow \A \S^{-1}$ is defined by $a\mapsto [a,1]$. This turns out to give an inclusion if $\S$ contains uniquely \emph{regular} elements---that is, elements $s$ such that
\[
0\in \{as,sa\} \ \Rightarrow\  a=0,\quad \forall a\in \A.
\]
An analog construction to that of $\A \S^{-1}$ can be done if $\S\subseteq \A$ is a left Ore set, and in the ``bilateral'' case those two localizations coincide, modulo an isomorphism whose restriction to (the image of) $\A$ is trivial.
\begin{rk} \label{simple_prop_mult}
Before going further, we note a simple property of the multiplication. Given $(a,s)\in \A\times\S$, let $(b,t)\in \A\times\S$ be such that $at = sb$. Then, by definition,
\(
[1,s][a,1] = [b,t].
\)
Now, given any $u\in\A$ such that $us\in\S$, one still has that $uat=usb$, and therefore
\[
[1,us][ua,1] = [b,t] = [1,s][a,1].
\]
\end{rk}

We are ready to treat  the case of a unital involutive algebra $\A$, which is not dealt with in ring theory and constitutes our humble contribution to the subject. Suppose that $\S\subseteq\A$ is an Ore subset such that $\S^\dagger = \S$ (observe that, in this case, the left and right Ore conditions are equivalent). On $\A \S^{-1}$ define the following operation:
\[
[a,s]^\dagger 
	= [1,s^\dagger] [a^\dagger,1]. 
\]
By Remark \ref{simple_prop_mult}, this depends uniquely on the equivalence class of $(a,s)$ because, given $u\in\A$ such that $su\in\S$,
\[
[au,su]^\dagger  = [1,u^\dagger s^\dagger] [u^\dagger a^\dagger,1] =  [1,s^\dagger][a^\dagger,1] = [a,s]^\dagger.
\]

\begin{prop}
 $(\cdot)^\dagger: \A \S^{-1}\rightarrow \A \S^{-1}$ is an involution, and $\ell:\A\rightarrow \A \S^{-1}$ is a morphism of unital involutive algebras.
\end{prop}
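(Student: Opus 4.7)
The statement about $\ell$ is immediate from the formula: $\ell(a)^\dagger = [a,1]^\dagger = [1,1^\dagger][a^\dagger,1] = [a^\dagger,1] = \ell(a^\dagger)$, while $\ell(1_\A) = [1,1]$ is the unit of $\A\S^{-1}$ and linearity/multiplicativity of $\ell$ have already been recorded. So the real content is that $(\cdot)^\dagger$ on $\A\S^{-1}$ is an involution, i.e., an antilinear anti-morphism squaring to the identity. I would avoid a direct verification from the sum/product formulas---nesting several existential Ore choices is a combinatorial swamp---and instead leverage the universal property already established.

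Concretely, I would introduce $\tau:\A\rightarrow (\A\S^{-1})^{\mathrm{op}}$, $a\mapsto \ell(a^\dagger)$. Since $(\cdot)^\dagger$ reverses multiplication on $\A$, $\tau$ is a unital ring morphism; and since $\S^\dagger = \S$, for $s\in\S$ the element $\tau(s) = \ell(s^\dagger)$ is invertible in $\A\S^{-1}$, hence in its opposite (inverses being the same element). The universal property, applied with $\mathcal B = (\A\S^{-1})^{\mathrm{op}}$, then produces a unique ring morphism $\tilde\tau:\A\S^{-1}\rightarrow(\A\S^{-1})^{\mathrm{op}}$ extending $\tau$ along $\ell$, which is the same data as a ring anti-morphism $\A\S^{-1}\rightarrow\A\S^{-1}$. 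To identify it with the formula-defined $(\cdot)^\dagger$, I would note first that every $[a,s]$ factors as $\ell(a)\ell(s)^{-1}$ (a one-line check from the product rule, taking $b=t=1$); applying the anti-morphism $\tilde\tau$ and reading the resulting composition back in $\A\S^{-1}$ yields
\[
\tilde\tau([a,s]) \;=\; \ell(s^\dagger)^{-1}\ell(a^\dagger) \;=\; [1,s^\dagger][a^\dagger,1] \;=\; [a,s]^\dagger,
\]
so the formula coincides with $\tilde\tau$ and is therefore well-defined and anti-multiplicative.

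With $\dagger$ thus identified as a ring anti-morphism, involutivity follows from a second appeal to uniqueness: $\tilde\tau\circ\tilde\tau$ is an ordinary ring morphism $\A\S^{-1}\rightarrow\A\S^{-1}$ whose pullback along $\ell$ sends $a$ to $\ell(a^{\dagger\dagger}) = \ell(a)$, so it must coincide with the identity extension of $\ell$. Antilinearity is then automatic: for $\lambda\in\C$,
\[
(\lambda x)^\dagger \;=\; (\ell(\lambda)x)^\dagger \;=\; x^\dagger\ell(\lambda)^\dagger \;=\; x^\dagger\ell(\bar\lambda) \;=\; \bar\lambda\, x^\dagger,
\]
using centrality of scalars. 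The only mildly subtle point is reassuring oneself that the universal property genuinely applies to the target $(\A\S^{-1})^{\mathrm{op}}$---it does, since that statement was formulated for arbitrary unital rings---so no hidden combinatorial obstruction remains.
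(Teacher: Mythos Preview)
Your argument is correct and takes a genuinely different route from the paper. The paper proceeds by direct computation: for antilinearity it picks $(b,t)$ with $s_1t=s_2b$, expands both $(\lambda[a_1,s_1]+[a_2,s_2])^\dagger$ and $\bar\lambda[a_1,s_1]^\dagger+[a_2,s_2]^\dagger$, and matches them using Remark~\ref{simple_prop_mult}; for anti-multiplicativity it picks $(b,t)$ with $a_2t=s_1b$ and does the same; involutivity is then a one-line unwinding of the formula. Well-definedness of the formula on equivalence classes is checked separately, just before the proposition.

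Your approach trades these computations for two invocations of the universal property: one to produce the anti-morphism $\tilde\tau$, and one (uniqueness) to force $\tilde\tau^2=\mathrm{id}$. This is conceptually cleaner and avoids tracking Ore witnesses through nested choices; it also yields well-definedness of the formula $[a,s]^\dagger=[1,s^\dagger][a^\dagger,1]$ as a by-product rather than a prerequisite. The paper's computation, on the other hand, is entirely self-contained and makes visible exactly how the Ore relations interact with the involution. Both proofs are short; yours scales better if one later varies the setup (other localizations, other involutions), while the paper's makes the mechanics transparent for readers unfamiliar with the opposite-ring trick.
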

\begin{proof} 
In fact, given $(a_1,s_1),(a_2,s_2)\in \A\times \S$:
\begin{itemize}
\item
	Let $(b,t)\in \A\times \S$ be such that  $s_1t=s_2b$. We have that
	\[
	\bigl( \lambda[a_1,s_1]+[a_2,s_2] \bigr)^\dagger = [\lambda a_1t+a_2b,s_1t]^\dagger = [1,t^\dagger s_1^\dagger] [\bar\lambda t^\dagger a_1^\dagger + b^\dagger a_2^\dagger,1]. 
	\]
	 On the other hand, using Remark \ref{simple_prop_mult},
	\begin{align*}
	\bar\lambda[a_1,s_1]^\dagger + [a_2,s_2]^\dagger &= [1,s_1^\dagger] [\bar\lambda a_1^\dagger,1] + [1,s_2^\dagger] [a_2^\dagger,1] \\ 
		&= [1,t^\dagger s_1^\dagger] [\bar\lambda t^\dagger a_1^\dagger,1] + [1,b^\dagger s_2^\dagger] [b^\dagger a_2^\dagger,1] \\
		&= [1,t^\dagger s_1^\dagger] [\bar\lambda t^\dagger a_1^\dagger + b^\dagger a_2^\dagger,1].
	\end{align*}
	Hence, $(\cdot)^\dagger$ is antilinear.
\item
	Let $(b,t)\in \A\times \S$ be such that  $a_2t=s_1b$. We have that
	\[
	\bigl( [a_1,s_1][a_2,s_2] \bigr)^\dagger = [a_1b,s_2t]^\dagger 
		= [1,t^\dagger s_2^\dagger][b^\dagger a_1^\dagger,1].
	\]
	On the other hand,
	\begin{align*}
	[a_2,s_2]^\dagger [a_1,s_1]^\dagger &= [1,s_2^\dagger][a_2^\dagger,1][1,s_1^\dagger][a_1^\dagger,1] = [1,s_2^\dagger][a_2^\dagger,s_1^\dagger][a_1^\dagger,1] \\
		&= [1,s_2^\dagger][1,t^\dagger][b^\dagger,1][a_1^\dagger,1] = [1,t^\dagger s_2^\dagger][b^\dagger a_1^\dagger,1].
	\end{align*}
\item
	Finally,
	\[
	[a,s]^{\dagger\dagger} = \bigl( [1,s^\dagger][a^\dagger,1] \bigr)^\dagger = [a^\dagger,1]^\dagger[1,s^\dagger]^\dagger = [a,1][1,s] 
		= [a,s]. \qedhere
	\]
\end{itemize}
\end{proof}
From now on, we will write $as^{-1}$ instead of $[a,s]$ for the elements of $\A \S^{-1}$.

\subsection{Integrable representations and representations of $\A\S^{-1}$}

All over this subsection $\A$ will be a unital involutive algebra and we will assume that
\(
\S = \Set{\prod_{i=1}^n(1+a_i^\dagger a_i) | a_i\in\A }
\)
contains only regular elements and satisfies the Ore condition.
We start by proving that every ultracyclic representation of $\A$ admits an integrable extension, which is a simple consequence of the following fact.

\begin{lem}
$\Pi(\A)$ is cofinal in $\Pi(\A \S^{-1})$.
\end{lem}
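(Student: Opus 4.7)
The plan is a common-denominator reduction followed by an explicit sum-of-squares decomposition of $1 - v^{-1}$ for suitable $v \in \S$.

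For the reduction, given $p = \sum_i \lambda_i x_i^\dagger x_i \in \Pi(\A\S^{-1})$, the left Ore condition (applied iteratively) produces a common denominator $t \in \S$ and elements $b_i \in \A$ with $x_i = t^{-1} b_i$. Setting $v = tt^\dagger \in \S$ (which is hermitian), we obtain $p = \sum_i \lambda_i b_i^\dagger v^{-1} b_i$. The candidate bound is $q = \sum_i \lambda_i b_i^\dagger b_i \in \Pi(\A)$; for this choice
\[
q - p = \sum_i \lambda_i\, b_i^\dagger (1 - v^{-1}) b_i
\]
is a positive combination of congruences of $1 - v^{-1}$, so it lies in $\Pi(\A\S^{-1})$ as soon as $1 - v^{-1}$ does.

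I would prove $1 - v^{-1} \in \Pi(\A\S^{-1})$ by induction on the length of $t$ as a product of atomic factors $1 + a_k^\dagger a_k$. Writing $t = t' u$ with $u = 1 + a^\dagger a$ hermitian, one has $v = t' u^2 t'^\dagger$, and the identity $1 = t'^{-\dagger}(t'^\dagger t') t'^{-1}$ gives
\[
1 - v^{-1} = t'^{-\dagger}\bigl[(t'^\dagger t' - 1) + (1 - u^{-2})\bigr]\, t'^{-1}.
\]
By congruence, it suffices that both bracketed summands be positive in $\A\S^{-1}$. The first lies in $\Pi(\A)$: a parallel induction on the length of $t$ establishes $t^\dagger t - 1 \in \Pi(\A)$ for every $t \in \S$, using the decomposition $u t'^\dagger t' u - 1 = u(t'^\dagger t' - 1)u + (u^2 - 1)$ together with $u^2 - 1 = 2 a^\dagger a + (a^\dagger a)^\dagger(a^\dagger a) \in \Pi(\A)$. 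The second is handled by direct computation: since $u$ commutes with $a^\dagger a$,
\[
1 - u^{-2} = u^{-2}(u^2 - 1) = 2\, u^{-2} a^\dagger a + u^{-2}(a^\dagger a)^2 = 2\,(au^{-1})^\dagger(au^{-1}) + (a^\dagger a\, u^{-1})^\dagger(a^\dagger a\, u^{-1}) \in \Pi(\A\S^{-1}).
\]

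The main technical subtlety is the careful exploitation of the commutation $[u, a^\dagger a] = 0$ (while $u$ does \emph{not} commute with $a$ or $a^\dagger$ individually): this relation is precisely what permits rewriting $u^{-2} a^\dagger a$ and $u^{-2}(a^\dagger a)^2$ as explicit squares in $\A\S^{-1}$, supplying the algebraic input that makes the noncommutative cofinality go through.
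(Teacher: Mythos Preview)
Your proof is correct and follows essentially the same route as the paper: reduce to showing $(t^{-1})^\dagger t^{-1}\leq 1$ for $t\in\S$, then induct on the number of factors $1+a^\dagger a$ in $t$ using the base computation $1-u^{-2}\in\Pi(\A\S^{-1})$ for $u=1+a^\dagger a$. The paper's execution is marginally leaner---it skips the common-denominator step (bounding each $x_i^\dagger x_i$ separately already yields an upper bound in $\Pi(\A)$) and replaces your auxiliary claim $t'^\dagger t'-1\in\Pi(\A)$ by the direct nested congruence $u_1^{-1}\cdots u_{n-1}^{-1}u_n^{-2}u_{n-1}^{-1}\cdots u_1^{-1}\leq u_1^{-1}\cdots u_{n-1}^{-2}\cdots u_1^{-1}\leq\cdots\leq 1$---but the substance is the same.
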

\begin{proof}
Given $(a,s)\in\A\times\S$, we will prove that $(s^{-1}a)^\dagger s^{-1}a  \leq a^\dagger a$ (recall that all the elements of $\A\S^{-1}$ can be written in the form $s^{-1}a$).
In order to do so, note the following order property of involutive algebras, which is a direct consequence of the definition: if $x\leq y$, then $a^\dagger xa\leq a^\dagger ya$. Therefore, it suffices to show that $(s^{-1})^\dagger s^{-1} \leq 1$, for all $s\in \S$---which we do next. 

Let $b\in\A$. We have that
\(
1\leq 1+ 2b^\dagger b + (b^\dagger b)^2 = (1+b^\dagger b)^2, 
\)
from which
\[
\bigl((1+b^\dagger b)^{-1}\bigr)^\dagger(1+b^\dagger b)^{-1} = (1+b^\dagger b)^{-2} \leq 1.
\]
Now, suppose that
\[
s = (1+b_1^\dagger b_1)(1+b_2^\dagger b_2)\cdots (1+b_n^\dagger b_n)\in  S,\quad b_i\in\A.
\] 
Inductively, we see that
\[
(s^{-1})^\dagger s^{-1} = (1+b_1^\dagger b_1)^{-1}\cdots (1+b_n^\dagger b_n)^{-1}(1+b_n^\dagger b_n)^{-1} \cdots (1+b_1^\dagger b_1)^{-1} \leq 1. \qedhere
\]
\end{proof}

\begin{coro}
Every ultracyclic representation $\pi:\A\rightarrow \LHV$ admits an integrable extension  $\tilde\pi:\A\rightarrow \L^\dagger(\tilde \V)$. 
\end{coro}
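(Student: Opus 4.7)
The plan is to use the GNS correspondence of Section~\ref{alg_rep_thy} to go back and forth between states and ultracyclic representations. Starting from the ultracyclic pair $(\pi,\Omega)$, form the associated state $f(a)=\langle\Omega,\pi(a)\Omega\rangle\in\Sigma(\A)$; then extend $f$ to a state $\tilde f\in\Sigma(\A\S^{-1})$, apply the GNS construction to $\tilde f$ on the involutive algebra $\A\S^{-1}$ to produce a representation $\hat\pi:\A\S^{-1}\to\L^\dagger(\tilde\V)$ on an enlarged Hilbert space $\tilde\H$, and take the restriction $\tilde\pi:=\hat\pi|_\A$.

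Granted such an extension $\tilde f$, the rest is mechanical. The compatibility $\tilde f|_\A=f$ together with Cauchy-Schwartz (applied to $f$ within $\A$) yields an isometric, $\A$-equivariant injection $\V=\A/\mathcal I_f\hookrightarrow\A\S^{-1}/\mathcal I_{\tilde f}=\tilde\V$, which extends to an isometric embedding of Hilbert space completions $\H\hookrightarrow\tilde\H$; under this identification $\tilde\pi=\hat\pi|_\A$ evidently extends $\pi$. Proposition~\ref{S inv -> anal} applied to $\tilde\pi$ with $\mathcal B=\A\S^{-1}$ and the $\S$-inverting extension $\hat\pi$ then gives integrability.

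The main obstacle is the construction of $\tilde f$. The natural tool is the Krein-Kantorovich positive Hahn-Banach theorem. The preceding lemma, combined with the hermitian-difference decomposition
\[
v=\tfrac12(1+v^2)-\tfrac12(1-v)^\dagger(1-v),\qquad v=v^\dagger\in\A\S^{-1},
\]
shows that $\A_\text h$ majorizes the real ordered vector space $((\A\S^{-1})_\text h,\Pi(\A\S^{-1}))$, which is the structural hypothesis of the theorem. The delicate point will be to verify positivity of $f$ on the a priori larger cone $\A_\text h\cap\Pi(\A\S^{-1})$, and not only on $\Pi(\A)$; once this is established the extension is immediate, and cofinality is exactly the tool that should allow the value of $f$ on such an element to be sandwiched between $0$ and $f$ of a majorant chosen from $\Pi(\A)$.
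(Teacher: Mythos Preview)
Your strategy coincides with the paper's: pass from $(\pi,\Omega)$ to the state $f$, extend $f$ positively to $\A\S^{-1}$ using the cofinality established in the preceding lemma together with a Hahn--Banach/Krein--Riesz extension theorem, run GNS on $\A\S^{-1}$, and restrict to $\A$, invoking Proposition~\ref{S inv -> anal} for integrability. The paper does exactly this, citing Conway's positive extension theorem without further comment; your treatment of the embedding $\V\hookrightarrow\tilde\V$ and of the decomposition $v=\tfrac12(1+v^2)-\tfrac12(1-v)^\dagger(1-v)$ is more explicit than the paper's but not different in substance.

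You are in fact more scrupulous than the paper on one point: the Krein--Riesz theorem requires $f\geq 0$ on $\A_{\mathrm h}\cap\Pi(\A\S^{-1})$, not merely on $\Pi(\A)$, and these cones may differ. The paper passes over this silently. However, your proposed resolution does not work. Given $x\in\A_{\mathrm h}\cap\Pi(\A\S^{-1})$, cofinality produces $y\in\Pi(\A)$ with $y-x\in\Pi(\A\S^{-1})$; but then $y-x$ again lies only in the larger cone $\A_{\mathrm h}\cap\Pi(\A\S^{-1})$, so the decomposition $f(y)=f(x)+f(y-x)$ tells you nothing about the sign of $f(x)$ unless you already know $f(y-x)\geq 0$---which is an instance of the very statement you are trying to prove. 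The ``sandwich'' $0\leq f(x)\leq f(y)$ is thus circular: both inequalities presuppose that $f$ respects the $\Pi(\A\S^{-1})$-order on $\A_{\mathrm h}$. So while you have located a genuine subtlety (one the paper does not address), the argument you sketch for it does not close the gap.
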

\begin{proof}
Let $f\in \Sigma(\A)$ be the state producing, by GNS construction, the representation $\pi$.
Since $\A_\text{h}$ is cofinal in the \emph{real} ordered vector space $(\A\S^{-1})_\text{h}$, the restriction $f_\text{h} = f|_{\A_\text{h}}$ admits a positive extension $\tilde f_\text{h}$ to all of $(\A\S^{-1})_\text{h}$  (see, for instance, \cite[Theorem 9.8]{m:Conw94}) which, in turn, uniquely determines a state  $\tilde f\in\Sigma(\A\S^{-1})$ by
\[
\tilde f(x+\I y) = \tilde f_\text{h}(x) + \I \tilde f_\text{h}(y),\quad x,y\in\A_\text{h}.
\]
By GNS construction, we get an ultracyclic representation $\tilde\pi: \A\S^{-1}\rightarrow \L^\dagger(\tilde\V)$, where 
\[
\tilde\V = (\A\S^{-1})/\set{ a\in\A\S^{-1} | \tilde f(a^\dagger a) = 0}.
\]
The inclusion $\A\subseteq \A\S^{-1}$ induces an inclusion $\V\subseteq\tilde\V$, which is well defined because 
\[
\set{a\in\A | f(a^\dagger a) = 0 } \subseteq \set{ a\in \A\S^{-1} | \tilde f(a^\dagger a) = 0}.
\]
The restriction of $\tilde\pi$ to $\A$ gives the desired extension, which is integrable thanks to Proposition~\ref{S inv -> anal}.
\end{proof}

\begin{rk} 
Results of this kind are not unknown in some important cases. For instance, in~\cite{m:BorcYngv90} Borchers and Yngvason work out the case of Borchers algebras, appearing naturally in quantum field theory.
\end{rk}

Next we prove our main result. We start with the following crucial lemma.

\begin{lem} \label{as^{-1}=t^{-1}b}
Suppose that $\pi$ is integrable, and let $(a,s),(b,t)\in\A\times\S$ be such that $ta=bs$. Then,
\[
\bar\pi(a)\bar\pi(s)^{-1} \supseteq \bar\pi(t)^{-1}\bar\pi(b).
\] 
In particular, 
\(
\bar\pi(s)^{-1}\V\subseteq\dom{\bar\pi(a)},
\)
for all $(a,s)\in \A\times\S$.
\end{lem}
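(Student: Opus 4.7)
The plan is to prove the inclusion first and then deduce the ``in particular'' statement from it. Since $\pi$ is integrable, Lemma~\ref{pi(s)=S} tells us that $\bar\pi(s)$ and $\bar\pi(t)$ are invertible with bounded inverses defined everywhere on $\H$. The domain of $\bar\pi(t)^{-1}\bar\pi(b)$ is therefore just $\dom{\bar\pi(b)}$, so the inclusion to be established amounts to showing: for every $\xi\in\dom{\bar\pi(b)}$, one has $\bar\pi(s)^{-1}\xi\in\dom{\bar\pi(a)}$ and $\bar\pi(a)\bar\pi(s)^{-1}\xi = \bar\pi(t)^{-1}\bar\pi(b)\xi$.

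The key idea is to exploit Lemma~\ref{pi(s)V is a core}, which guarantees that $\bar\pi(s)\V=\pi(s)\V$ is a core for $\bar\pi(b)$. Hence, given $\xi\in\dom{\bar\pi(b)}$, I choose a sequence $u_n\in\V$ with $v_n := \pi(s)u_n$ satisfying $v_n\to\xi$ and $\pi(b)v_n\to\bar\pi(b)\xi$. The whole point of this choice is that $\bar\pi(s)^{-1}v_n = u_n$ lies in $\V$, where $\pi(a)$ is directly available.

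Now I use the identity $ta=bs$ in $\A$: on $\V$ this yields $\pi(t)\pi(a)u_n = \pi(b)\pi(s)u_n = \pi(b)v_n$, and applying the bounded operator $\bar\pi(t)^{-1}$ gives
\[
\pi(a)u_n = \bar\pi(t)^{-1}\pi(b)v_n.
\]
Taking limits, the boundedness of $\bar\pi(s)^{-1}$ gives $u_n = \bar\pi(s)^{-1}v_n \to \bar\pi(s)^{-1}\xi$, while the boundedness of $\bar\pi(t)^{-1}$ gives $\pi(a)u_n \to \bar\pi(t)^{-1}\bar\pi(b)\xi$. The closedness of $\bar\pi(a)$ then yields $\bar\pi(s)^{-1}\xi\in\dom{\bar\pi(a)}$ with $\bar\pi(a)\bar\pi(s)^{-1}\xi = \bar\pi(t)^{-1}\bar\pi(b)\xi$, as required.

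For the final assertion, the Ore condition supplies, for any $(a,s)\in\A\times\S$, some $(b,t)\in\A\times\S$ with $ta=bs$; applying the inclusion just proved to any $\xi\in\V\subseteq\dom{\bar\pi(b)}$ gives $\bar\pi(s)^{-1}\xi\in\dom{\bar\pi(a)}$. The main obstacle I anticipate is precisely the domain issue that forced the indirect choice of approximating sequence: one cannot simply approximate $\xi$ by arbitrary elements of $\V$ and push them through $\bar\pi(s)^{-1}$, because there is no reason for the resulting vectors to land in $\dom{\bar\pi(a)}$ with controllable images; it is the core property from Lemma~\ref{pi(s)V is a core} that makes the argument go through.
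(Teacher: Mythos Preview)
Your proof is correct and follows essentially the same route as the paper: both use Lemma~\ref{pi(s)V is a core} to approximate an arbitrary $\xi\in\dom{\bar\pi(b)}$ by elements of $\pi(s)\V$, apply the algebraic identity $ta=bs$ on $\V$, and then close up using the boundedness of $\bar\pi(s)^{-1},\bar\pi(t)^{-1}$ together with the closedness of $\bar\pi(a)$. Your write-up is in fact a bit more explicit than the paper's, spelling out the convergence $u_n\to\bar\pi(s)^{-1}\xi$ and invoking the Ore condition for the ``in particular'' clause.
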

\begin{proof}
Indeed, starting from $\pi(t)\pi(a) = \pi(b)\pi(s)$ we find, pre-multiplying by $\bar\pi(t)^{-1}$ and post-multiplying by $\bar\pi(s)^{-1}|_{\pi(s)\V}$, that
\[
\bar\pi(a)\bar\pi(s)^{-1}|_{\pi(s)\V} = \bar\pi(t)^{-1}\bar\pi(b)|_{\pi(s)\V}.
\]
Now, given $\xi\in\dom{\bar\pi(b)}$, by Lemma \ref{pi(s)V is a core} there exists a sequence $[sc_n] \rightarrow\xi$ such that $[bsc_n] \rightarrow \bar\pi(b)\xi$. Since $\bar\pi(t)^{-1}$ is bounded, we see that 
\[
\bar\pi(a)[c_n] = \bar\pi(a)\bar\pi(s)^{-1}[sc_n] =  \bar\pi(t)^{-1}[bsc_n]
\]
converges. Therefore, $\bar\pi(s)^{-1}\xi\in\dom{\bar\pi(a)}$ and $\bar\pi(a)\bar\pi(s)^{-1}\xi = \bar\pi(t)^{-1}\bar\pi(b)\xi$, as needed.
\end{proof}

\begin{thm} \label{ppal}
There exists a bijective correspondence between integrable representations of $\A$ and representations of $\A\S^{-1}$. 
\end{thm}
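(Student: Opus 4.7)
The plan is to exhibit the bijection explicitly: restriction in one direction, universal extension in the other. In one direction, given any representation $\rho : \A\S^{-1} \to \L^\dagger(\tilde\V)$, composition with the canonical inclusion $\ell : \A \to \A\S^{-1}$ yields a representation of $\A$ on $\tilde\V$, which is integrable by Proposition~\ref{S inv -> anal}: $\rho$ itself provides the required $\S$-inverting extension.

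For the non-trivial direction, given an integrable $\pi : \A \to \L^\dagger(\V)$, I would work with the enlarged domain
\[
\tilde\V = \sum_{s \in \S} \bar\pi(s)^{-1}\V,
\]
which is dense since it contains $\V$. The first task is to verify that $\tilde\V$ is invariant under $\bar\pi(a)$ for every $a\in\A$ and under $\bar\pi(s)^{-1}$ for every $s\in\S$. For the first invariance, Lemma~\ref{as^{-1}=t^{-1}b} combined with the Ore condition gives, for any $(a,s)\in\A\times\S$, some $(b,t)\in\A\times\S$ with $ta = bs$ such that $\bar\pi(a)\bar\pi(s)^{-1}v = \bar\pi(t)^{-1}\bar\pi(b)v \in \bar\pi(t)^{-1}\V$. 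For the second, Corollary~\ref{pi(s)pi(t)=pi(st)} identifies $\bar\pi(r)^{-1}\bar\pi(s)^{-1}\V$ with $\bar\pi(sr)^{-1}\V$.

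Once these invariance statements are in hand, the map $\phi: \A \to \L^\dagger(\tilde\V)$, $a\mapsto \bar\pi(a)|_{\tilde\V}$, is a well-defined morphism of involutive algebras: the $\L^\dagger$-adjoint condition comes from integrability applied to $a^\dagger$ together with the same invariance argument, and the morphism property from Proposition~\ref{integrable <-> closure is rep}. Moreover each $\phi(s)$, $s\in\S$, is invertible in $\L^\dagger(\tilde\V)$ with inverse $\bar\pi(s)^{-1}|_{\tilde\V}$; the $\L^\dagger$-adjoint of this inverse is $\bar\pi(s^\dagger)^{-1}|_{\tilde\V}$, which again preserves $\tilde\V$ because $s^\dagger\in\S$. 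The universal property of the Ore localization then delivers a unique morphism $\tilde\pi: \A\S^{-1}\to \L^\dagger(\tilde\V)$ extending $\phi$, given on generators by $\tilde\pi(as^{-1}) = \bar\pi(a)\bar\pi(s)^{-1}|_{\tilde\V}$, and compatibility with involution holds on those generators and extends multiplicatively.

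The two constructions are mutual inverses: a representation of $\A\S^{-1}$ is determined by its restriction to $\A$ (since $\tilde\pi(s^{-1})$ is forced to be the operator inverse of $\tilde\pi(s)$), and conversely the extension applied to the restriction $\rho|_\A$ returns $\rho$ on the generators $as^{-1}$ and hence everywhere. The main obstacle I anticipate is the bookkeeping involved in proving invariance of $\tilde\V$; once that is secured, appealing to the universal property of Ore localization---rather than constructing $\tilde\pi$ by hand---spares us the tedious verification that the assignment $as^{-1}\mapsto \bar\pi(a)\bar\pi(s)^{-1}$ respects the equivalence relation and multiplication law of fractions.
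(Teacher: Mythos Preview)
Your proposal is correct and follows essentially the same route as the paper: the same enlarged domain $\tilde\V=\sum_{s\in\S}\bar\pi(s)^{-1}\V$, the same invariance arguments via Lemma~\ref{as^{-1}=t^{-1}b} and Corollary~\ref{pi(s)pi(t)=pi(st)}, and the same appeal to the universal property of the Ore localization once each $\tilde\pi(s)$ is shown to be invertible in $\L^\dagger(\tilde\V)$. Your added paragraph verifying that the two constructions are mutual inverses is a welcome supplement that the paper leaves implicit.
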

\begin{proof}
By Proposition \ref{S inv -> anal}, representations of $\A\S^{-1}$ induce, by restriction, integrable representations of $\A$. In the other direction,
let $\pi:\A\rightarrow \LHV$ be an integrable representation. We extend it to $\A\S^{-1}$ as follows. Define
\[
\tilde \V = \text{span }\biggl( \bigcup_{s\in \mathcal S} \bar\pi(s)^{-1}\V \biggr) \supseteq \V.
\]
Note that, by Lemma \ref{as^{-1}=t^{-1}b}, $\tilde\V\subseteq \dom{\bar\pi(a)}$, for all $a\in\A$. We will show that $\tilde\pi(a) := \bar\pi(a)|_{\tilde\V}$ belongs to $\L^\dagger(\tilde \V)$, which means verifying that:
\begin{enumerate}
	\item
$\bar\pi(a)\tilde \V\subseteq \tilde \V$. Indeed, given $s\in\S$, let $(b,t)\in\A\times\S$ be such that $ta=bs$. One has that $\bar\pi(a)\bar\pi(s)^{-1}\V = \bar\pi(t)^{-1}\bar\pi(b)\V \subseteq \bar\pi(t)^{-1}\V \subseteq \tilde\V$.
	\item
$\tilde \V\subseteq \dom {(\bar\pi(a)|_{\tilde \V})^*} = \dom {\bar\pi(a)^*}$. By integrability, $\bar\pi(a)^* = \bar\pi(a^\dagger)$ and we already know that $\tilde \V\subseteq \dom{\bar\pi(a^\dagger)}$.
	\item
$(\bar\pi(a)|_{\tilde \V})^*\tilde \V\subseteq\tilde \V$. Again, follows by integrability and the already proved fact that $\bar\pi(a^\dagger)\tilde \V\subseteq \tilde \V$.
\end{enumerate}
%
By the universal property of the Ore localization, the morphism $\tilde\pi$ will factor through $\A\S^{-1}$ if we prove, given $s\in \S$, that $\tilde\pi(s)$ is invertible in $\L^\dagger(\tilde \V)$. This means verifying that $\bar\pi(s)^{-1}|_{\tilde \V}\in\L^\dagger(\tilde \V)$. But, by integrability, $\tilde \V$ is invariant under $\bar\pi(s)^{-1}$:
\[ 
\bar\pi(s)^{-1}\tilde \V = \bigcup_{t\in S} \bar\pi(s)^{-1}\bar\pi(t)^{-1}\V = \bigcup_{t\in S} \bar\pi(ts)^{-1}\V 
	\subseteq \tilde \V.
\] 
The same calculation shows that $\tilde \V$ is invariant under
\(
( \bar\pi(s)^{-1} )^* = \bar\pi(s^\dagger)^{-1},
\)
and the conclusion follows.
\end{proof}

\begin{rk}
An important question arises: do integrable representations always admit an extension to  the universal localization $\A_\S$? Our proof makes essential use of the Ore condition, which conceivable might not always hold for subsets of $\mathcal C(\H)$ which are algebras.
\end{rk}

\section*{Appendix: short reminder on unbounded operators}

Let $\H$ be a Hilbert space and $a:\dom a\subseteq\H \rightarrow \H$ a linear operator, not necessarily bounded. We say that $a$ is \emph{closed} if its graph
\[
\gr(a) = \set{\xi\oplus a\xi | \xi\in\H } \subseteq \H\oplus\H
\]
is closed, and that $a$ is \emph{closable} if $\overline{\gr(a)}$ is a graph, namely if
\[
0\oplus\eta\in \overline{\gr(a)} \Rightarrow \eta=0,\quad \forall\eta\in\H.
\]
In this case, there exists a unique operator $\bar a$ such that $\overline{\gr(a)} = \gr(\bar a)$, which is called the \emph{closure} of $a$. 
\begin{rk}
If a closed operator was defined all over $\H$, by the closed graph theorem it would be bounded. While this is not always the case, it is expected that closed operators inherit some of the good behaviour of bounded operators.
\end{rk}

In order to define the adjoint $a^*$ of the unbounded operator $a$, we consider the linear function
\[
f_\xi = \langle \xi,a(\cdot) \rangle: \dom a\rightarrow \C.
\]
If $f_\xi$ had an extension $\bar f_\xi\in\H^*$, then $a^*\xi$ should be its inverse image via the antilinear isomorphism $\iota:\H\rightarrow\H^*$ induced by the inner product. Define, whence, 
\[
\dom {a^*} = \Set{\xi\in\H | \exists C(\xi)>0, \forall\eta\in\dom a,\ \mod{ \langle \xi,a\eta \rangle } \leq C(\xi)\|\eta\| }.
\]
By Riesz's theorem, if $\xi\in\dom {a^*}$, then $a^*\xi = \iota^{-1}(\bar f_\xi)$ exists. 
\begin{rk}
$\dom {a^*}$ has no reason to be dense in $\H$.
\end{rk}
A geometrical interpretation for the adjoint operator can be obtained through the isometry
\[
J: \xi\oplus\eta\in \H\oplus\H \mapsto (-\eta)\oplus\xi \in\H\oplus\H.
\]
Indeed, it is easily shown that
\(
\gr(a^*) = \bigl(J\gr(a)\bigr)^\perp.
\)
In particular, $a^*$ is closed. Another consequence is the following proposition.
\begin{prop}
$\dom {a^*}$ is dense in $\H$ if, and only if, $a$ is closable. Moreover, in this case $a^{**} = \bar a$.
\end{prop}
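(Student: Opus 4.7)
The plan is to leverage the geometric characterization $\gr(a^*)=(J\gr(a))^\perp$ stated just above, together with the fact that $J$ is a unitary on $\H\oplus\H$ satisfying $J^2=-\mathrm{id}$. From unitarity I will freely use $J(S^\perp)=(JS)^\perp$ and $(JS)^{\perp\perp}=J\overline{S}$ for any subspace $S$.

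For the first equivalence, I want to translate the statement ``$\overline{\gr(a)}$ is a graph'' into a statement about $\dom{a^*}$. Since $\gr(a^*)$ is closed and $J$ is unitary, taking complements in $\gr(a^*)=(J\gr(a))^\perp$ gives
\[
\gr(a^*)^\perp = \overline{J\gr(a)} = J\,\overline{\gr(a)}.
\]
Hence $0\oplus\eta\in\overline{\gr(a)}$ iff $J(0\oplus\eta)=(-\eta)\oplus 0\in \gr(a^*)^\perp$, iff for every $\xi'\in\dom{a^*}$ one has $\langle -\eta,\xi'\rangle+\langle 0,a^*\xi'\rangle=0$, iff $\eta\perp\dom{a^*}$. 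Therefore
\[
\set{\eta\in\H \mid 0\oplus\eta\in\overline{\gr(a)}} = (\dom{a^*})^\perp,
\]
and $a$ is closable precisely when this space is $\{0\}$, i.e.\ when $\dom{a^*}$ is dense.

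For the second assertion, assuming $a$ is closable (so $a^*$ is densely defined and $a^{**}$ exists), I will apply the same geometric formula once more to $a^*$:
\[
\gr(a^{**}) = \bigl(J\gr(a^*)\bigr)^\perp.
\]
Using $\gr(a^*)=(J\gr(a))^\perp$ and the unitarity of $J$,
\[
J\gr(a^*) = J\bigl((J\gr(a))^\perp\bigr) = \bigl(J^2\gr(a)\bigr)^\perp = \bigl(-\gr(a)\bigr)^\perp = \gr(a)^\perp.
\]
Taking one more orthogonal complement yields $\gr(a^{**})=\gr(a)^{\perp\perp}=\overline{\gr(a)}=\gr(\bar a)$, hence $a^{**}=\bar a$.

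I do not expect a serious obstacle here; the whole argument is a bookkeeping exercise in the symplectic-like involution $J$. The only point requiring a bit of care is the computation of $J(0\oplus\eta)$ and of $J^2=-\mathrm{id}$, together with the invariance of orthogonal complements under $J$ (and under the sign change $S\mapsto -S$). Once these are noted, both claims reduce to one line each.
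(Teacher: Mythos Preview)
Your proof is correct and follows precisely the route the paper indicates: the proposition is introduced as ``another consequence'' of the identity $\gr(a^*)=(J\gr(a))^\perp$, and your argument unwinds exactly that, using unitarity of $J$ and $J^2=-\mathrm{id}$ to identify $\{\eta:0\oplus\eta\in\overline{\gr(a)}\}$ with $(\dom{a^*})^\perp$ and then $\gr(a^{**})$ with $\gr(a)^{\perp\perp}$. There is nothing to add.
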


An operator with dense domain $a:\dom a\subseteq\H\rightarrow \H$ is \emph{hermitian} if
\[
\langle \xi,a\eta \rangle = \langle a\xi,\eta \rangle,\quad \forall \xi,\eta\in\dom a.
\]
This means that $a^*$ is an extension of $a$, which we denote by $a\subseteq a^*$. In particular, $\dom a\subseteq \dom {a^*}$ and therefore $a$ is closable. Since the closure of $a$ is a minimal closed extension, we have that
\[
a\subseteq \bar a \subseteq a^*.
\]
If $\bar a = a^*$, we say that $a$ is \emph{essentially self-adjoint} and that $\bar a$ is \emph{self-adjoint}---but, of course, this is not always the case.

Now we consider closed operators. In order to distinguish them from arbitrary unbounded operators, we write them in capital letters.
\begin{defn}
A subspace $\V\subseteq \H$ is said to be a \emph{core} for the closed operator $A:\dom A\subseteq \H\rightarrow \H$ if $\V\subseteq \dom A$ and $\overline{A|_\V} = A$.
\end{defn}
By definition, a core $\V$ for $A$ is such that $\set{ \xi\oplus A\xi | \xi\in \V }$ is dense in $\gr(A)$. Now, the isomorphism of vector spaces
\[
\xi\in\dom A \mapsto \xi\oplus A\xi \in \gr(A)
\]
enables one to pass the inner product of $\H\oplus\H$ to $\dom A$: we obtain
\[
\langle \xi,\eta \rangle_A = \langle \xi,\eta \rangle + \langle A\xi,A\eta \rangle.
\]
In terms of this space, $\V$ is a core for $A$ if, and only if, it is dense in $(\dom A,\norm{\ }_A)$.
We conclude the appendix with the statement of an important result---see~\cite{m:Conw94}, for example, for a proof.

\begin{prop} \label{App:fund}
Let $A:\dom A\subseteq\H\rightarrow\H$ be a closed operator. One has that:
\begin{enumerate}
\item
	$1+A^*A:A^{-1}\dom {A^*}\rightarrow \H$ is a bijection, and its inverse
	\[
	(1+A^*A)^{-1}:\H\rightarrow A^{-1}\dom {A^*}\subseteq \dom A 
	\]
	is bounded as an operator $(\H,\norm{\ }) \rightarrow (\dom A,\norm{\ }_A)$.
\item
	$\dom {A^*A} = A^{-1}\dom {A^*}$ is a core for $A$.
\end{enumerate}
\end{prop}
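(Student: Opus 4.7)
The plan is to exploit the orthogonal decomposition of $\H\oplus\H$ coming from the isometry $J:\xi\oplus\eta\mapsto(-\eta)\oplus\xi$ introduced just before the proposition. The key preparatory observation is that $J$ is unitary with $J^2=-I$, so applying $J$ to the identity $\gr(A^*)=\bigl(J\gr(A)\bigr)^\perp$ gives $J\gr(A^*)=\gr(A)^\perp$. Since $A$ is closed and $A^*$ is always closed, both summands are closed, so
\[
\H\oplus\H \;=\; \gr(A)\,\oplus\, J\gr(A^*)
\]
orthogonally.

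For part (1), I would fix an arbitrary $\xi\in\H$ and decompose $0\oplus\xi$ along this splitting: there exist unique $\eta\in\dom A$ and $\zeta\in\dom{A^*}$ with
\[
0\oplus\xi \;=\; (\eta\oplus A\eta)\,+\,(-A^*\zeta\oplus\zeta).
\]
Comparing components gives $\eta=A^*\zeta$ and $A\eta+\zeta=\xi$; the first equation says exactly that $A\eta\in\dom{A^*}$, i.e.\ $\eta\in\dom{A^*A}=A^{-1}\dom{A^*}$, and substituting back gives $(1+A^*A)\eta=\xi$. That proves surjectivity. Injectivity is immediate from the identity $\langle\eta,(1+A^*A)\eta\rangle=\|\eta\|^2+\|A\eta\|^2$, valid for $\eta\in\dom{A^*A}$ by the adjoint relation. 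Boundedness of the inverse into $(\dom A,\|\cdot\|_A)$ then falls out of the Pythagorean theorem for the decomposition, since $\|\eta\|_A^2=\|\eta\|^2+\|A\eta\|^2=\|\eta\oplus A\eta\|^2\leq\|0\oplus\xi\|^2=\|\xi\|^2$.

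For part (2), I would show that $\dom{A^*A}$ is dense in $(\dom A,\|\cdot\|_A)$ by a standard orthogonality argument. Suppose $\eta\in\dom A$ satisfies $\langle\eta,\zeta\rangle_A=0$ for all $\zeta\in\dom{A^*A}$. Writing this out,
\[
0 \;=\; \langle\eta,\zeta\rangle+\langle A\eta,A\zeta\rangle;
\]
because $A\zeta\in\dom{A^*}$, the defining relation of the adjoint gives $\langle A\eta,A\zeta\rangle=\langle\eta,A^*A\zeta\rangle$, so the whole expression collapses to $\langle\eta,(1+A^*A)\zeta\rangle=0$. By part (1), $(1+A^*A)$ maps $\dom{A^*A}$ onto all of $\H$, hence $\eta\perp\H$ and $\eta=0$.

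I do not expect any serious obstacle: once the orthogonal decomposition $\H\oplus\H=\gr(A)\oplus J\gr(A^*)$ is justified, both parts are essentially bookkeeping with inner products. The one place that requires care is confirming that $J(V^\perp)=(JV)^\perp$ is used correctly (which is where unitarity of $J$ enters), and that the adjoint identity $\langle Au,v\rangle=\langle u,A^*v\rangle$ applies in the form needed for part (2), where the roles of $u$ and $v$ must be $\eta\in\dom A$ and $A\zeta\in\dom{A^*}$ respectively.
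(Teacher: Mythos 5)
The paper does not actually prove this proposition; it is stated at the end of the appendix with a pointer to \cite{m:Conw94}, so there is no in-paper argument to compare against. Your strategy is the standard von Neumann one and is the right idea, and part (2) is correct as written (modulo noting that $(\dom A,\norm{\ }_A)$ is complete because $A$ is closed, so that triviality of the orthogonal complement gives density).

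However, part (1) contains a genuine bookkeeping error that breaks the key step. Decomposing $0\oplus\xi = (\eta\oplus A\eta) + \bigl((-A^*\zeta)\oplus\zeta\bigr)$ gives $\eta = A^*\zeta$ and $\xi = A\eta+\zeta$. The first equation says that $\eta$ lies in the \emph{range} of $A^*$; it does \emph{not} say that $A\eta\in\dom{A^*}$, and substituting it into the second equation yields $\xi = (1+AA^*)\zeta$, i.e.\ surjectivity of $1+AA^*$ on $\dom{AA^*}$ --- not the claimed $(1+A^*A)\eta=\xi$. The fix is to decompose $\xi\oplus 0$ instead: then the second component gives $\zeta=-A\eta$, which genuinely says $A\eta\in\dom{A^*}$, and the first component gives $\xi=\eta+A^*A\eta=(1+A^*A)\eta$; the Pythagorean estimate becomes $\norm{\eta}_A^2=\norm{\eta\oplus A\eta}^2\leq\norm{\xi\oplus 0}^2=\norm{\xi}^2$, which is exactly the boundedness you need. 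With that one-line correction the rest of your argument (injectivity from $\langle\eta,(1+A^*A)\eta\rangle=\norm{\eta}^2+\norm{A\eta}^2$, and part (2) via orthogonality against $(1+A^*A)\dom{A^*A}=\H$) goes through.
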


\bibliographystyle{amsplain}
\bibliography{math,phys,mphy}

\providecommand{\bysame}{\leavevmode\hbox to3em{\hrulefill}\thinspace}
\providecommand{\MR}{\relax\ifhmode\unskip\space\fi MR }
\providecommand{\MRhref}[2]{%
  \href{http://www.ams.org/mathscinet-getitem?mr=#1}{#2}
}
\providecommand{\href}[2]{#2}
\begin{thebibliography}{10}

\bibitem{m:AntoInouTrap02}
J.~P. Antoine, A.~Inoue, and C.~Trapani, \emph{Partial $*$-algebras and their
  operator realizations}, Kluwer Academic Publishers, 2002.

\bibitem{mp:Baga07}
F.~Bagarello, \emph{Algebras of unbounded operators and physical applications:
  a survey}, Rev. Math. Phys. \textbf{19} (2007), 231--271.

\bibitem{m:BhatInouOgi01}
S.~J. Bhatt, A.~Inoue, and H.~Ogi, \emph{Unbounded {C}*-seminorms and unbounded
  {C}*-spectral algebras}, J. Operat. Theor. \textbf{45} (2001), no.~1, 53--80.

\bibitem{m:BorcYngv90}
H.~J. Borchers and J.~Yngvason, \emph{Partially conmmutative moment problems},
  Math. Nachr. \textbf{145} (1990), 111--117.

\bibitem{m:Conw94}
J.~B. Conway, \emph{A course in functional analysis}, Springer--Verlag, 1994.

\bibitem{mp:DubiHenn90}
D.~A. Dubin and M.~A. Hennings, \emph{Quantum mechanics, algebras and
  distributions}, Pitman research notes in mathematics, vol. 238, Longman
  scientific and technical, 1990.

\bibitem{m:Frag05}
M.~Fragoulopoulou, \emph{Topological algebras with involution}, North-{H}olland
  mathematical studies, vol. 200, Elsevier, 2005.

\bibitem{m:GoodWarf04}
K.~R. Goodearl and R.~B. Warfield, \emph{An introduction to noncommutative
  noetherian rings}, second ed., Cambridge university press, 2004.

\bibitem{m:Inou76}
A.~Inoue, \emph{On a class of unbounded operator algebras}, Pacific J. Math.
  \textbf{65} (1976), no.~1, 77--95.

\bibitem{m:Inou77}
\bysame, \emph{Unbounded representations of symmetric $*$-algebras}, J. Math.
  Soc. Japan \textbf{29} (1977), no.~2, 219--232.

\bibitem{m:Inou04}
\bysame, \emph{Well-behaved $*$-representations of $*$-algebras}, Acta Univ.
  Oulu. Ser. A Sci. Rerum Natur. \textbf{408} (2004), 107--117.

\bibitem{m:MallHara05}
A.~Mallios and M.~Haralampidou, \emph{Topological algebras and applications},
  American mathematical society, 2005.

\bibitem{m:Powe71}
R.~T. Powers, \emph{Self-adjoint algebras of unbounded operators}, Commun.
  Math. Phys. \textbf{21} (1971), 85--124.

\bibitem{m:Schm90}
K.~Schmüdgen, \emph{Unbounded operator algebras and representation theory},
  Birkhäuser, 1990.

\bibitem{m:Schm02}
\bysame, \emph{On well-behaved unbounded representations of $*$-algebras}, J.
  Operat. Theor. \textbf{48} (2002), no.~3, 487--502.

\bibitem{m:Sega53}
I.~Segal, \emph{A non-commutative extension of abstract integration}, Ann.
  Math. \textbf{57} (1953), no.~3, 401--457.

\bibitem{m:Trap06}
C.~Trapani, \emph{Unbounded {C}*-seminorms, biweights, and $*$-representations
  of $*$-algebras: a review}, Int. J. Math. Math. Sci. \textbf{2006} (2006),
  1--34.

\end{thebibliography}

\end{document}